\numberwithin{equation}{section}
\newtheorem{theorem}{Theorem}[section]
\newtheorem{lemma}[theorem]{{\bf Lemma}}
\newtheorem{proposition}[theorem]{{\bf Proposition}}
\newenvironment{proof}{\noindent{\em Proof:}}{\quad \hfill
$\Box$\vspace{2ex}}
 \newtheorem{remark}{Remark}[section]
\begin{document}
\setcounter{page}{1}
\begin{center}

\vspace{0.4cm} {\Large\bf  $l_1$-$l_2$ Regularization of Split Feasibility Problems}
\vspace{0.4cm}

 Abdellatif {\sc Moudafi} \footnote{Aix Marseille Universit\'e,  CNRS-L.I.S UMR 7296, Domaine Universitaire de Saint-J\'er\^ome. Avenue Escadrille Normandie-Niemen, 13397 Marseille, abdellatif.moudafi@univ-amu.fr} and Aviv {\sc Gibali}
\footnote{(Corresponding author) ORT Braude College,  Department of Mathematics, Karmiel 2161002, Israel, avivg@braude.ac.il}
\\

\end{center}
\def\baselinestretch{1.0}\small

\begin{abstract}

Numerous problems in signal processing and imaging, statistical learning and data mining, or computer
vision  can be formulated as optimization problems which consist in minimizing a sum of convex functions,
not necessarily differentiable, possibly composed with linear operators and that in turn can be transformed  to split feasibility problems (SFP), see for example \cite{ce94}. Each function is typically either a data fidelity term or a regularization term
enforcing some properties on the solution, see for example \cite{cpp09} and references therein. In this paper we are interested in  Split Feasibility Problems which can be seen as a general form of  $Q$-Lasso  introduced in \cite{aasnx13} that extended the well-known Lasso of Tibshirani \cite{Tibshirani96}.
$Q$ is a closed convex subset of a Euclidean $m$-space, for some integer $m\geq1$, that can be interpreted as the set of errors  within given tolerance level when linear measurements are taken to recover a signal/image via the Lasso.
 Inspired by  recent works by Lou et al \cite{ly, xcxz12},  we  are interested in a nonconvex regularization of SFP and  propose  three split algorithms  for  solving  this general case. The first one is based on   the DC (difference of convex) algorithm (DCA)  introduced by Pham Dinh Tao, the second one in nothing else than the celebrate forward-backward algorithm and the third one uses a method introduced by Mine and Fukushima. It is worth mentioning that the SFP model
 a number of applied problems arising from  signal/image processing and specially optimization  problems for intensity-modulated radiation therapy (IMRT) treatment planning, see for example \cite{cbmt06}.\\

\noindent \textbf{AMS Subject Classification.}  Primary, 49J53,
65K10; Secondary, 49M37, 90C25. \smallskip

\noindent \textbf{Key Words and Phrases.}   Q-Lasso,  Split feasibility,  soft-thresholding,  regularization, DCA algorithm, forward-backward iterations, Mine-Fukushima algorithm, Douglas-Rachford algorithm. \smallskip
\end{abstract}

\section{\protect\small Introduction and Preliminaries}

{\small Recent developments in science and technology have caused a
revolution in data processing, as large datasets are becoming increasingly
available and important. To meet the need in big data area, the field of
compressive sensing (CS) \cite{Donoho16} is rapidly blooming. The process of
CS consists of encoding and decoding. The process of encoding involves
taking a set of (linear) measurements, $b=Ax$, where $A$ is a matrix of size
$m\times n$. If $m<n$, we say the signal $x\in I\!\!R^{n}$ can be
compressed. The process of decoding is to recover $x$ from $b$ with an
additional assumption that $x$ is sparse. It can be expressed as an
optimization problem,
\begin{equation}
\min \Vert x\Vert _{0}\ \ \mbox{subject to}\ Ax=b,
\end{equation}%
with $\Vert \cdot \Vert _{0}$ being the $l_{0}$ norm, which counts the
number of nonzero entries of $x$; that is
\begin{equation}
\Vert x\Vert _{0}=|\{x_{i}\mid x_{i}\neq 0\}|
\end{equation}%
where $|\cdot |$ denotes here the cardinality, i.e., the number of elements
of a set. So minimizing the $l_{0}$ norm is equivalent to finding the
sparsest solution. One of the biggest obstacles in CS is solving the
decoding problem above, as $l_{0}$ minimization is NP-hard. A popular
approach is to replace $l_{0}$ by the convex norm $l_{1}$, which often gives
a satisfactory sparse solution. This $l_{1}$ heuristic has been applied in
many different fields such as geology and geophysics, spectroscopy, and
ultrasound imaging.\newline
Recently, there has been an increase in applying nonconvex metrics as
alternative approaches to $l_{1}$ . In particular, the nonconvex metric $%
l_{p}$ for $p\in (0,1)$ in \cite{Chartrand07} can be regarded as a
continuation strategy to approximate $l_{0}$ as $p\rightarrow 0$. The
optimization strategies include iterative reweighting \cite{Chartrand07} and
half thresholding \cite{xcxz12}, the scale-invariant $l_{1}$, formulated as
the ratio of $l_{1}$ and $l_{2}$, was discussed in \cite{elx13}. Other
nonconvex $l_{1}$ variants include transformed $l_{1}$, sorted $l_{1}$ and
capped $l_{1}$. It is demonstrated in a series of papers \cite{ly, xcxz12}
that difference of the $l_{1}$ and $l_{2}$ norms, denoted as $l_{1}$-$l_{2}$%
, outperforms $l_{1}$ and $l_{p}$ in terms of promoting sparsity when
sensing matrix $A$ is highly coherent. Based on this idea, we propose the
same type of regularization for SFP and propose three splitting algorithms,
the first one is nothing but the DC (difference of convex) algorithm (DCA)
introduced by Pham Dinh Tao see for example \cite{Natarajan95}, the second
one in nothing else than the celebrate forward-backward algorithm and the
third one uses a method introduced by Mine and Fukushima in \cite{mf81} for
minimizing the sum of a convex function and a differentiable one.\smallskip
\newline
First, remember that the lasso of Tibshirani \cite{Tibshirani96} is the
minimization problem
\begin{equation}
\min_{x\in I\!\!R^{n}}\frac{1}{2}\Vert Ax-b\Vert _{2}^{2}+\gamma \Vert
x\Vert _{1},
\end{equation}%
where $A$ is an $m\times n$ real matrix, $b\in I\!\!R^{m}$ and $\gamma >0$
is a tuning parameter. It is equivalent to the the basic pursuit (BP) of
Chen et al. \cite{cds98}
\begin{equation}
\min_{x\in I\!\!R^{n}}\Vert x\Vert _{1}\quad \mbox{subject to}\ Ax=b.
\label{Eq:1.2}
\end{equation}%
However, due to errors of measurements, the constraint $Ax=b$ is actually
inexact; It turns out that problem (\ref{Eq:1.2}) is reformulated as
\begin{equation}
\min_{x\in I\!\!R^{n}}\Vert x\Vert _{1}\quad \mbox{subject to}\ \Vert
Ax-b\Vert _{p}\leq \varepsilon ,  \label{Eq:1.3}
\end{equation}%
where $\varepsilon >0$ is the tolerance level of errors and $p$ is often $%
1,2 $ or $\infty $. It is noticed in \cite{aasnx13} that if we let $%
Q:=B_{\varepsilon }(b)$, the closed ball in $I\!\!R^{n}$ with center $b$ and
radius $\varepsilon $, then (\ref{Eq:1.3}) is rewritten as
\begin{equation}
\min_{x\in I\!\!R^{n}}\Vert x\Vert _{1}\quad \mbox{subject to}\ Ax\in Q.
\end{equation}%
With $Q$ a nonempty closed convex set of $I\!\!R^{m}$ and $P_{Q}$ the
projection from $I\!\!R^{m}$ onto $Q$ and since that the constraint is
equivalent to the condition $Ax-P_{Q}(Ax)=0$, this leads to the following
equivalent Lagrangian formulation
\begin{equation}
\min_{x\in I\!\!R^{n}}\frac{1}{2}\Vert (I-P_{Q})Ax\Vert _{2}^{2}+\gamma
\Vert x\Vert _{1},  \label{Eq:1.5}
\end{equation}%
with $\gamma >0$ a Lagrangian multiplier. A connection is also made in \cite%
{aasnx13} with the so-called split feasibility problem \cite{ce94} which is
stated as finding $x$ verifying
\begin{equation}
x\in C,\quad Ax\in Q,  \label{Eq:1.6}
\end{equation}%
where $C$ and $Q$ are closed convex subsets of $I\!\!R^{n}$ and $I\!\!R^{m}$%
, respectively. An equivalent minimization formulation of (\ref{Eq:1.6}) is
\begin{equation}
\min_{x\in C}\frac{1}{2}\Vert (I-P_{Q})Ax\Vert _{2}^{2}.
\end{equation}%
Its $l_{1}$ regularization is given as
\begin{equation}
\min_{x\in C}\frac{1}{2}\Vert (I-P_{Q})Ax\Vert _{2}^{2}+\gamma \Vert x\Vert
_{1},
\end{equation}%
where $\gamma >0$ is a regularization parameter.\newline
This convex relaxation attracts considerable attention see for example \cite%
{aasnx13} and references there in. In this paper we study a non-convex but
Lipschitz continuous metric $l_{1}$-$l_{2}$ for SFP. As illustrated in \cite%
{ly} the level curves of $l_{1}$-$l_{2}$ are closer to $l_{0}$ than those of
$l_{1}$, which motivated us to consider the nonconvex $l_{1}$-$l_{2}$
regularization for split feasibility problem, namely
\begin{equation}
\min_{x\in C}\frac{1}{2}\Vert (I-P_{Q})Ax\Vert _{2}^{2}+\gamma (\Vert x\Vert
_{1}-\Vert x\Vert _{2}),  \label{Eq:1.9}
\end{equation}%
and propose three algorithms. The first uses the DCA which is a descent
method without line search introduced by Tao and An \cite{Natarajan95} for
minimizing a function $f$ which is the difference of two lower
semicontinuous proper convex functions $g$ and $h$ on the space $I\!\!R^{n}$%
. The second one is based on the gradient proximal method to solve the
problem (\ref{Eq:1.9}) by full splitting, that is, at every iteration, the
only operations involved are evaluations of the gradient of the function $%
\frac{1}{2}\Vert (I-P_{Q})A(\cdot )\Vert _{2}^{2}$, the proximal mapping of $%
\Vert \cdot \Vert _{1}-\Vert \cdot \Vert _{2}$, $A$, or its transpose $A^{t}$%
. The third one is based on an algorithm for minimizing the sum of a convex
function and a differentiable one introduced by Mine and Fukushima in \cite%
{mf81}.\smallskip \newline
In \cite{aasnx13}, properties and iterative methods for (\ref{Eq:1.5}) are
investigated. Remember also that many authors devoted their works to the
unconstrained minimization problem $\min_{x\in H}f_{1}(x)+f_{2}(x)$ with $%
f_{1},f_{2}$ are two proper, convex lower semi continuous functions defined
on a Hilbert space $H$ and $f_{2}$ differentiable with a $\beta $-Lipschitz
continuous gradient for some $\beta >0$ and an effective method to solve it
is the forward-backward algorithm which from an initial value $x_{0}$
generates a sequence $(x_{k})$ by the following iteration
\begin{equation}
x_{k+1}=(1-\lambda _{k})x_{k}+\lambda _{k}prox_{\gamma
_{k}f_{1}}(x_{k}-\gamma _{k}\nabla f_{2}(x_{k})),
\end{equation}%
where $\gamma _{k}>0$ is the algorithm step-size, $0<\lambda _{k}<1$ is a
relaxation parameter and $prox_{\gamma _{k}f_{1}}$ being the proximal
mapping defined in (\ref{eq:Prox_def}).\newline
It is well-known, see for instance \cite{cp07}, that if $(\gamma _{k})$ is
bounded and $(\lambda _{k})$ is bounded from below, then $(x_{k})$ weakly
converges to a solution of $\min_{x\in H}f_{1}(x)+f_{2}(x)$ provided that
the set of solutions is nonempty.\newline
In order to relax the assumption on the differentiability of $f_{2}$, the
Douglas-Rachford algorithm was introduced. It generates a sequence $(y_{k})$
as follows
\begin{equation}
\left\{
\begin{array}{l}
\ y_{k+1/2}=prox_{\kappa f_{2}}y_{k}; \\
\ y_{k+1}=y_{k}+\ \tau _{k}\big(prox_{\kappa
f_{1}}(2y_{k+1/2}-y_{k})-y_{k+1/2}\big)%
\end{array}%
\right.
\end{equation}%
where $\kappa >0$, $(\tau _{k})$ is a sequence of positive reals. It is
well-known that $(y_{k})$ converges weakly to $y$ such that $prox_{\kappa
f_{2}}y$ is a solution of the unconstrained minimization problem above
provided that: $\forall k\in I\!\!N,\ \tau _{k}\in ]0,2[$ and $%
\sum_{k=0}^{\infty }\tau _{k}(2-\tau _{k})=+\infty $ and the set of
solutions is nonempty.\smallskip \newline
In what follows we are interested in (\ref{Eq:1.9}) which is more
challenging and we will focus our attention on the algorithmic aspect.  }

{\small Our paper is organized as follows. In Section \ref{Sec:Comp}, we
first start with definitions and notions which are needed for the
presentation of our three proposed schemes, the DCA algorithm, the
forward-backward algorithm and the third based on Mine-Fukushima algorithm.
We also give full convergence theorem for the proposed schemes. Later in
Section \ref{Sec:Num}, we present several numerical experiments which
illustrates the performances of our schemes compared with the CQ and relaxed
CQ algorithms. We include random linear system of equations as well as an
example in sparse signal recovery. Finally, in Section \ref{Sec:App} we
provide further insights into how to compute the proximal mapping of a sum
of two functions by coupling the Douglas-Rachford and the forward-backward
algorithms.  }

\section{\protect\small Computational approaches}

{\small \label{Sec:Comp}  }

\subsection{\protect\small DCA}

{\small First, remember that the \textit{subdifferential set} (or just subdifferential) of a convex
function $h$ is defined as
\begin{equation}
\partial h(x):=\{u\in I\!\!R^{n};h(y)\geq h(x)+\langle u,y-x\rangle \
\forall y\in I\!\!R^{n}\}.
\end{equation}%
Each element of $\partial h(x)$ is called \textit{subgradient}. In case that the function $h$ is continuously differentiable then $\partial h(x) = \{\nabla h(x)\}$, this is the \textit{gradient} of $h$. It is easily seen that
\begin{equation}
\partial \frac{1}{2}\Vert Ax-y\Vert ^{2}=\nabla \frac{1}{2}\Vert Ax-y\Vert
^{2}=A^{t}(Ax-y),  \label{Eq:grad_norm2}
\end{equation}%
and
\begin{equation}\label{Eq:Subdiff}
(\partial \Vert x\Vert _{1})_{i}=\left\{
\begin{array}{ll}
\ sgn(x_{i}) & \mbox{if}\ \ x_{i}\not=0; \\
\ \mbox{any element of }\lbrack -1,1] & \mbox{if}\ \ x_{i}=0. \\
&
\end{array}%
\right.
\end{equation}
The \textit{characteristic function} of a set $C\subseteq I\!\!R^{n}$ is
defined as
\begin{equation}
i_{C}(x)=\left\{
\begin{array}{ll}
\ 0 & \mbox{if}\ \ x\in C; \\
\ +\infty & {\mbox otherwise}\ \
\end{array}%
\right.
\end{equation}%
such function is convenient to enforce hard constraints on the solution.
Moreover, the \textit{normal cone} of $C$ at $x\in C$, denoted by $%
N_{C}\left( x\right) $ is defined
\begin{equation}
N_{C}\left( x\right) :=\{d\in I\!\!R^{n}\mid \left\langle d,y-x\right\rangle
\leq 0,\text{ }\forall y\in C\}.  \label{eq:normal_cone}
\end{equation}%
A known relation between the above definition is that $\partial i_{C}=N_{C}$%
. Another useful definition which will be useful in the sequel is the
following. A sequence $(x_{k})$ is called \textit{asymptotically regular},
if $\lim_{n\rightarrow \infty }\Vert x_{k+1}-x_{k}\Vert =0$.\newline
}

{\small For finding critical points of $f:=g-h$, the DCA involves the
construction of two sequences $(x_k)$ and $(y_k)$ by the following rules  }

{\small
\begin{equation}
\left\{
\begin{array}{l}
\ y_{k}\in \partial h(x_{k}); \\
\ x_{k+1}=argmin_{x\in I\!\!R^{n}}\big(g(x)-(h(x_{k})+\langle
y_{k},x-x_{k}\rangle )\big).%
\end{array}%
\right.  \label{eq:2.6}
\end{equation}%
Note that by the definition of subdifferential , we can write
\begin{equation}
h(x_{k+1})\geq h(x_{k})+\langle y_{k},x_{k+1}-x_{k}\rangle .
\end{equation}%
Since $x_{k+1}$ minimizes $g(x)-(h(x_{k})+\langle y_{k},x-x_{k}\rangle )$,
we also have
\begin{equation}
g(x_{k+1})-(h(x_{k})+\langle y_{k},x_{k+1}-x_{k}\rangle )\leq
g(x_{k})-h(x_{k}).
\end{equation}%
Combining the last inequalities, we obtain
\begin{equation}
f(x_{k})=g(x_{k})-h(x_{k})\geq g(x_{k+1})-(h(x_{k})+\langle
y_{k},x_{k+1}-x_{k}\rangle )\geq f(x_{k+1}).
\end{equation}%
Therefore, the DCA provides a monotonically decreasing sequence $(f(x_{k}))$
which converges provided that the objective function $f$ is bounded below.%
\newline
The objective function in (\ref{Eq:1.9}) has the following DC decomposition
\begin{equation}
\min_{x\in I\!\!R^{n}}\left( \frac{1}{2}\Vert (I-P_{Q})Ax\Vert
_{2}^{2}+\gamma \Vert x\Vert _{1}+i_{C}(x)\right) -\gamma \Vert x\Vert _{2}.
\end{equation}%
Observe that $\Vert x\Vert _{2}$ is differentiable with gradient $x/\Vert
x\Vert _{2}$ for any $x\not=0$ and we also have $0\in \partial \Vert \cdot
\Vert _{2}(0)$, which leads to the following iterates
\begin{equation}
x_{k+1}=\left\{
\begin{array}{ll}
\ argmin_{x\in I\!\!R^{n}}\frac{1}{2}\Vert (I-P_{Q})Ax\Vert _{2}^{2}+\gamma
\Vert x\Vert _{1}+i_{C}(x) & \mbox{if}\ x_{k}=0 \\
\ argmin_{x\in I\!\!R^{n}}\frac{1}{2}\Vert (I-P_{Q})Ax\Vert _{2}^{2}+\gamma
\Vert x\Vert _{1}+i_{C}(x)-\left\langle x,\gamma \frac{x_{k}}{\Vert
x_{k}\Vert _{2}}\right\rangle & \mbox{if}\ x_{k}\not=0,%
\end{array}%
\right.  \label{Alg:2.3}
\end{equation}%
obtained by setting in the rules (\ref{eq:2.6}): $g(x)=1/2\Vert
(I-P_{Q})Ax\Vert _{2}^{2}+\gamma \Vert x\Vert _{1}+i_{C}(x)$ and $%
h(x)=\gamma \Vert x\Vert _{2}$. (\ref{Alg:2.3}) is equivalent, using the
definition of the characteristic function, to
\begin{equation}
x_{k+1}=\left\{
\begin{array}{ll}
\ argmin_{x\in C}\frac{1}{2}\Vert (I-P_{Q})Ax\Vert _{2}^{2}+\gamma \Vert
x\Vert _{1} & \mbox{if}\ x_{k}=0 \\
\ argmin_{x\in C}\frac{1}{2}\Vert (I-P_{Q})Ax\Vert _{2}^{2}+\gamma \Vert
x\Vert _{1}-\langle x,\gamma \frac{x_{k}}{\Vert x_{k}\Vert _{2}}\rangle & %
\mbox{if}\ x_{k}\not=0.%
\end{array}%
\right.
\end{equation}%
\ Now, we define for all $\gamma >0$, the following function
\begin{equation}
\Gamma (x)=\frac{1}{2}\Vert (I-P_{Q})Ax\Vert _{2}^{2}+\gamma (\Vert x\Vert
_{1}-\Vert x\Vert _{2})+i_{C}(x).
\end{equation}%
We are in a position to prove the following convergence properties of the
iterative step (\ref{Alg:2.3}):  }

\begin{proposition}
{\small Let $(x_k)$ be the sequence generated by Algorithm \ref{Alg:2.3}.%
\newline
(i) For all $\gamma>0$ we have that $\lim_{\Vert x\Vert_2\rightarrow
+\infty}\Gamma(x)=+\infty$. $\Gamma$ is therefore coercive in the sense that
its levels sets are bounded, namely $\{x\in I\!\!R^n; \Gamma(x)\leq
\Gamma(x_0)\}$ is bounded for any $x_0\in I\!\!R^n$.\newline
(ii) The sequence $(x_k)$ is bounded.\newline
(iii) If $\lim_{k\rightarrow +\infty}\Vert x_{k+1}-x_k\Vert_2=0$, i.e. $%
(x_k) $ is asymptotically regular, then any nonzero limit point $x^*$ of the
sequence $(x_k)$ is a stationary point of (\ref{Eq:1.9}), namely
\begin{equation}
0\in A^t(I-P_Q)Ax^*+\gamma\left(\partial \Vert x^*\Vert_1-\frac{x^*}{\Vert
x^*\Vert_2}\right)+N_C(x^*).
\end{equation}
}
\end{proposition}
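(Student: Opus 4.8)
For part (i), the plan is to bound $\Gamma$ below by a function that is manifestly coercive. Since $\frac{1}{2}\|(I-P_Q)Ax\|_2^2 \geq 0$ and $i_C(x) \geq 0$, it suffices to analyze $\gamma(\|x\|_1 - \|x\|_2)$. The norm equivalence $\|x\|_1 \geq \|x\|_2$ on $I\!\!R^n$ gives nonnegativity of this term but not coercivity by itself, so the key estimate I would use is the sharper inequality $\|x\|_1 - \|x\|_2 \geq (1 - \tfrac{1}{\sqrt{n}})\,\|x\|_1$ — or alternatively one can keep the quadratic term: if $x$ is such that $\|x\|_1$ stays bounded while $\|x\|_2 \to \infty$ this is impossible since $\|x\|_2 \leq \|x\|_1$, hence $\|x\|_2 \to \infty$ forces $\|x\|_1 \to \infty$, and then combining the above inequality shows $\gamma(\|x\|_1 - \|x\|_2) \to +\infty$. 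I would write this cleanly as $\Gamma(x) \geq \gamma\big(1 - \tfrac{1}{\sqrt n}\big)\|x\|_2 \to +\infty$, which immediately gives boundedness of every sublevel set $\{\Gamma \leq \Gamma(x_0)\}$.

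For part (ii), I would invoke the monotonicity computation already carried out in the excerpt: the DCA iterates satisfy $f(x_k) \geq f(x_{k+1})$ for the DC decomposition $f = g - h$, and here $f = \Gamma$. Hence $\Gamma(x_k) \leq \Gamma(x_0)$ for all $k$, so $(x_k)$ lies in the sublevel set $\{\Gamma \leq \Gamma(x_0)\}$, which is bounded by part (i). This part is essentially a one-line consequence of (i) plus the general DCA descent property.

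For part (iii), the plan is to pass to the limit in the optimality conditions of the subproblem defining $x_{k+1}$. Along a subsequence $x_{k_j} \to x^*$ with $x^* \neq 0$, the asymptotic regularity hypothesis gives $x_{k_j+1} \to x^*$ as well. The first-order optimality condition for the (convex) minimization in \eqref{Alg:2.3} in the case $x_{k_j} \neq 0$ reads
\begin{equation}
0 \in A^t(I-P_Q)Ax_{k_j+1} + \gamma\,\partial\|x_{k_j+1}\|_1 + N_C(x_{k_j+1}) - \gamma\frac{x_{k_j}}{\|x_{k_j}\|_2}.
\end{equation}
I would then take limits term by term: the map $x \mapsto A^t(I-P_Q)Ax$ is continuous (indeed Lipschitz, since $I - P_Q$ is firmly nonexpansive); the term $\gamma x_{k_j}/\|x_{k_j}\|_2$ converges to $\gamma x^*/\|x^*\|_2$ because $x^* \neq 0$ makes the normalization continuous near $x^*$; and for the two set-valued terms I would use the fact that $\partial\|\cdot\|_1$ and $N_C$ both have closed graph (outer semicontinuity of the subdifferential of a proper lsc convex function, and of the normal cone of a closed convex set). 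Choosing subgradients $u_{k_j} \in \partial\|x_{k_j+1}\|_1$ and $v_{k_j} \in N_C(x_{k_j+1})$ realizing the inclusion, these sequences are bounded ($\partial\|\cdot\|_1$ is uniformly bounded by $\sqrt n$, and $v_{k_j}$ is then bounded by the other terms), so after a further subsequence they converge to limits $u^* \in \partial\|x^*\|_1$ and $v^* \in N_C(x^*)$ by graph-closedness, yielding $0 \in A^t(I-P_Q)Ax^* + \gamma u^* - \gamma x^*/\|x^*\|_2 + v^*$, which is exactly the claimed stationarity inclusion.

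The main obstacle is the limiting argument in (iii): one must be careful that the excerpt's algorithm statement writes $x_{k+1} = \mathrm{argmin}(\dots)$ without asserting the argmin is used to produce a specific subgradient selection, so the honest route is to extract \emph{some} subgradient/normal-cone elements certifying optimality of $x_{k_j+1}$, establish their boundedness, and only then pass to the limit using outer semicontinuity — the nonzero-limit-point hypothesis is precisely what is needed to keep the $\ell_2$-normalization term well-behaved, since $x \mapsto x/\|x\|_2$ is discontinuous at the origin. Everything else (continuity of the quadratic gradient, boundedness of $\ell_1$ subgradients, closedness of $N_C$) is routine.
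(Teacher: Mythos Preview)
Your argument for (i) contains a genuine error: the inequality $\|x\|_1 - \|x\|_2 \geq (1 - \tfrac{1}{\sqrt n})\|x\|_1$ is false. It is equivalent to $\|x\|_1 \geq \sqrt n\,\|x\|_2$, which is the reverse of the Cauchy--Schwarz bound $\|x\|_1 \leq \sqrt n\,\|x\|_2$; in fact $(1-\tfrac{1}{\sqrt n})\|x\|_1$ is an \emph{upper} bound for $\|x\|_1-\|x\|_2$, with equality at $x=(1,\dots,1)$. The counterexample $x=e_1$ (any $1$-sparse vector) gives $\|x\|_1-\|x\|_2=0$ while $(1-\tfrac{1}{\sqrt n})\|x\|_1>0$ for $n\geq 2$. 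Consequently your conclusion $\Gamma(x)\geq \gamma(1-\tfrac{1}{\sqrt n})\|x\|_2$ is wrong, and coercivity cannot be obtained from the $\ell_1-\ell_2$ term alone: along any coordinate axis that term vanishes identically, so $\|x\|_2\to\infty$ does \emph{not} force $\gamma(\|x\|_1-\|x\|_2)\to+\infty$.

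The paper's route for (i) is different: it records only that $\|x\|_1-\|x\|_2\geq 0$ together with the equality characterization $\|x\|_1-\|x\|_2=0\Leftrightarrow \|x\|_0=1$, and then appeals to the quadratic term $\frac12\|(I-P_Q)Ax\|_2^2$ to handle the $1$-sparse directions where the regularizer gives nothing. (The paper is admittedly terse here; a complete argument needs the quadratic term to blow up along each coordinate direction, which implicitly uses that the columns of $A$ are nonzero and that $Q$ does not contain the corresponding ray at infinity.) The point for your revision is that the fidelity term is essential to (i) and cannot be discarded.

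Your treatment of (ii) and (iii) is correct and essentially matches the paper. For (ii) the paper derives the descent inequality explicitly (obtaining the sharper bound $\Gamma(x_k)-\Gamma(x_{k+1})\geq \tfrac12\|(I-P_Q)Ax_k-(I-P_Q)Ax_{k+1}\|^2 + \gamma(\|x_{k+1}\|_2-\|x_k\|_2-\langle y_k,x_{k+1}-x_k\rangle)$, which it reuses later), but your appeal to the general DCA descent suffices for boundedness. For (iii) the paper writes the optimality condition at $x_{k_\nu}$ (with $x_{k_\nu-1}$ in the linear term) rather than at $x_{k_j+1}$, and passes to the limit using maximal monotonicity of $\partial(\|\cdot\|_1+i_C)$ to get closedness of the graph; your variant---extracting bounded subgradient and normal-cone selections and using outer semicontinuity---is an equivalent and equally valid way to close the argument.
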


\begin{proof}
{\small \ Recall first that the support of $x$ is defined by $\mbox{supp}%
(x)=\{1\leq i\leq n;x_{i}\not=0\}$ and that $\Vert x\Vert _{0}=|\mbox{supp}%
(x)|$ is the cardinality of $\mbox{supp}(x)$. To prove (i)-(ii), remember
that for all $x\not=0$, we have $\Vert x\Vert _{1}-\Vert x\Vert _{2}\geq 0$
and that $\Vert x\Vert _{1}-\Vert x\Vert _{2}=0\Leftrightarrow \Vert x\Vert
_{0}=1$. With this fact in hand, we can easily verify that $\Gamma $ is
coercive. \newline
Now, a simple computation which uses the fact that $\Vert a\Vert ^{2}-\Vert
b\Vert ^{2}=\Vert a-b\Vert ^{2}+2\langle b,a-b\rangle $, gives
\begin{eqnarray}
\Gamma (x_{k})-\Gamma (x_{k+1}) &=&\frac{1}{2}\Vert Ax_{k}-Ax_{k+1}-\big(%
P_{Q}(Ax_{k})-P_{Q}(Ax_{k+1})\big)\Vert ^{2}  \notag \\
&+&\langle Ax_{k}-Ax_{k+1}-\big(P_{Q}(Ax_{k})-P_{Q}(Ax_{k+1})\big)%
,Ax_{k+1}-P_{Q}(Ax_{k+1})\rangle  \notag \\
&+&\gamma (\Vert x_{k}\Vert _{1}-\Vert x_{k+1}\Vert _{1}-\Vert x_{k}\Vert
_{2}+\Vert x_{k+1}\Vert _{2}).  \label{eq:star}
\end{eqnarray}%
The first-order optimality condition at $x_{k+1}$ as the solution of the
problem (\ref{Alg:2.3}) and the fact that $\partial (\Vert \cdot \Vert
_{1}+i_{C})(x)=\partial \Vert x\Vert _{1}+N_{C}(x)$ (since a norm is
continuous) lead to.
\begin{equation*}
A^{t}(I-P_{Q})Ax_{k+1}+\gamma (w_{k+1}-y_{k})+p_{k+1}=0,
\end{equation*}%
with $y_{k}\in \partial \Vert x_{k}\Vert _{2}$, $w_{k+1}\in \partial \Vert
x_{k+1}\Vert _{1}$ and $p_{k+1}\in N_{C}(x_{k+1})$. This combined with $%
\langle w_{k},x_{k+1}\rangle =\Vert x_{k+1}\Vert _{1}$ gives
\begin{equation}
\langle A(x_{k}-x_{k+1}),(I-P_{Q})Ax_{k+1}\rangle +\gamma (\langle
w_{k+1},x_{k}\rangle -\Vert x_{k+1}\Vert _{1}+\langle
y_{k},x_{k+1}-x_{k}\rangle )-\langle p_{k+1},x_{k+1}-x_{k}\rangle =0.
\label{Eq:2.6}
\end{equation}
}

{\small Combining (\ref{eq:star}) and (\ref{Eq:2.6}), we can write
\begin{eqnarray}
\Gamma (x_k)-\Gamma (x_{k+1})&=&\frac{1}{2}\Vert Ax_k-Ax_{k+1}-\big(%
P_Q(Ax_k)-P_Q(Ax_{k+1})\big)\Vert^2  \notag \\
&-& \gamma\big( \langle w_{k+1},x_k\rangle -\Vert x_{k+1}\Vert_1+ \langle
y_k, x_{k+1}-x_k\rangle\big)+\langle p_{k+1}, x_{k+1}-x_k\rangle  \notag \\
&-& \langle Ax_{k+1}-P_Q(Ax_{k+1}), P_Q(Ax_k)-P_Q(Ax_{k+1})\rangle  \notag \\
&+& \gamma ( \Vert x_{k}\Vert_1-\Vert x_{k+1}\Vert_1-\Vert
x_{k}\Vert_2+\Vert x_{k+1}\Vert_2)\rangle.  \label{Eq:stam}
\end{eqnarray}
}

{\small The characterization of the orthogonal projection, namely
\begin{equation}
\langle x-P_Q(x),z-P_Q(x)\rangle \leq 0 \ \forall z\in Q,
\end{equation}
assures that
\begin{equation}
\langle (I-P_Q)Ax_{k+1}, P_Q(Ax_k)-P_Q(Ax_{k+1}) \rangle\leq 0,
\end{equation}
and thus
\begin{eqnarray}
\Gamma (x_{k})-\Gamma (x_{k+1}) &\geq &\frac{1}{2}\Vert
(I-P_{Q})(Ax_{k})(I-P_{Q})(Ax_{k+1})\Vert ^{2}+\gamma (\Vert x_{k}\Vert
_{1}-\langle w_{k+1},x_{k}\rangle )  \notag \\
&+&\gamma (\Vert x_{k+1}\Vert _{2}-\Vert x_{k}\Vert _{2}-\langle
y_{k},x_{k+1}-x_{k}\rangle )+\langle p_{k+1},x_{k+1}-x_{k}\rangle .  \notag
\label{Eq:stam1}
\end{eqnarray}
}

{\small On the other hand, since $|w_{k+1,i}|\leq 1$ for $i=1,...,n$, $y_{k}\in
\partial \Vert x_{k}\Vert _{2}$ and $p_{k+1}\in N_{C}(x_{k+1})$, we also
have
\begin{equation}
\Vert x_{k}\Vert _{1}-\langle w_{k+1},x_{k}\rangle \geq 0\ \Vert
x_{k+1}\Vert _{2}-\Vert x_{k}\Vert _{2}-\langle y_{k},x_{k+1}-x_{k}\rangle
\geq 0\ \mbox{and}\ \langle p_{k+1},x_{k+1}-x_{k}\rangle \geq 0.
\end{equation}%
Consequently,
\begin{eqnarray}
\Gamma (x_{k})-\Gamma (x_{k+1}) &\geq &\frac{1}{2}\Vert
(I-P_{Q})(Ax_{k})-(I-P_{Q})(Ax_{k+1})\Vert ^{2}  \notag \\
+\gamma (\Vert x_{k+1}\Vert _{2}-\Vert x_{k}\Vert _{2}-\langle
y_{k},x_{k+1}-x_{k}\rangle ) &\geq &0.  \label{Eq:2.7}
\end{eqnarray}%
This ensures that the sequence $(\Gamma (x_{k}))$ is monotonically
decreasing, which in turn ensures that the sequence $(x_{k})\subset \{x\in
I\!\!R^{n},\Gamma (x)\leq \Gamma (x_{0})\}$ that is bounded since $\Gamma $
is coercive.\newline
(iii) If $x_{1}=x_{0}=0$, we then stop the algorithm producing the solution $%
x^{\ast }=0$. Otherwise, it follows from (\ref{Eq:2.7})
\begin{equation}
\Gamma (x_{0})-\Gamma (x_{1})\geq \gamma \Vert x_{1}\Vert _{2}>0,
\end{equation}%
so $x_{k}\not=0$ for all $k\geq 1$. Since $(\Gamma (x_{k}))$ is convergent,
substituting $y_{k}=\frac{x_{k}}{\Vert x_{k}\Vert _{2}}$, leads to  }

{\small
\begin{equation}
\lim_{k\rightarrow +\infty}\Vert (I-P_Q)(Ax_k)-(I-P_Q)(Ax_{k+1})\Vert^2=0 \ %
\mbox{and} \ \lim_{k\rightarrow +\infty}\Vert x_{k+1}\Vert_2-\frac{\langle
x_k,x_{k+1}\rangle}{\Vert x_k\Vert_2}=0.  \label{Eq:2.8}
\end{equation}
}

{\small Now, let $(x_{k_{\nu }})$ be a subsequence of $(x_{k})$ converging
to $x^{\ast }\not=0$, so the optimality condition at the $k_{\nu }$ the step
of Algorithm (\ref{Alg:2.3}) reads
\begin{equation}
-\left( A^{t}(I-P_{Q})Ax_{k_{\nu }}-\gamma \frac{x_{k_{\nu }-1}}{\Vert
x_{k_{\nu }-1}\Vert _{2}}\right) \in \gamma \partial \Vert x_{k_{\nu }}\Vert
_{1}+N_{C}(x_{k_{\nu }}).  \label{Eq:2.9}
\end{equation}%
Since $\lim_{\nu \rightarrow +\infty }x_{k_{\nu }}=x^{\ast }$, the operator $%
A^{t}(I-P_{Q})A$ is Lipschitz continuous, the sequence $(x_{k})$ is assumed
to be asymptotically regular and $x^{\ast }$ is away from 0, we have
\begin{eqnarray}
\lim_{\nu \rightarrow +\infty }\left( A^{t}(I-P_{Q})Ax_{k_{\nu }}-\gamma
\frac{x_{k_{\nu }-1}}{\Vert x_{k_{\nu }-1}\Vert _{2}}\right) &=&\lim_{\nu
\rightarrow +\infty }\left( A^{t}(I-P_{Q})Ax_{k_{\nu }}-\gamma \frac{%
x_{k_{\nu }}}{\Vert x_{k_{\nu }}\Vert _{2}}\right.  \notag \\
&+&\left. \gamma \left( \frac{x_{k_{\nu }}}{\Vert x_{k_{\nu }}\Vert _{2}}-%
\frac{x_{k_{\nu }-1}}{\Vert x_{k_{\nu }-1}\Vert _{2}}\right) \right)  \notag
\\
&=&A^{t}(I-P_{Q})Ax^{\ast }-\gamma \frac{x^{\ast }}{\Vert x^{\ast }\Vert _{2}%
},
\end{eqnarray}%
and by passing to the limit as $\nu \rightarrow +\infty $ in (\ref{Eq:2.9})
and by taking into account the fact that $\partial (\Vert \cdot \Vert
_{1}+i_{C})$ is a maximal monotone operator, which assures that its graph is
closed, we obtain at the limit  }

{\small
\begin{equation}
-\left( A^t(I-P_Q)Ax^*-\gamma\frac{x^*}{\Vert x^*\Vert_2} \right)\in
\gamma\partial \Vert x^*\Vert_1+N_C(x^*),
\end{equation}
}

{\small in other words $x^*$ is a stationary point.  }
\end{proof}

{\small \ The asymptotical regularity assumption is satisfied in the
particular case where $Q$ is a singleton considered in \cite{ylhx}. In what
follows, we will prove that it is also the case in the interesting setting
of closed convex cones which usually arises, for example, in statistical
applications and also in image recovery where subspaces are often used.
Likewise, when the projection has the nice property to be homogeneous with
respect to the set $Q$, which is the case, for instance, for balls,
rectangles,... when the points to project are outside.  }

\begin{proposition}
{\small The iteration sequence is asymptotically regular in the following
three cases:\newline
i) $Q=\{b\}$. \newline
ii) $Q$\ is a closed convex cone and when Q is a subspace. \newline
iii) The projection is a non-negative homogeneous function with respect to
the set $Q$, namely%
\begin{equation}
\forall \alpha >0\ \forall x\in I\!\!R^{n}\ \mbox{one has}\ P_{\alpha
Q}(x)=\alpha P_{Q}(x).
\end{equation}
}
\end{proposition}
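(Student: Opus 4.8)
The plan is to establish asymptotic regularity, i.e. $\lim_{k\to\infty}\|x_{k+1}-x_k\|_2=0$, in each of the three cases by exploiting the summability estimate already extracted from the previous proposition. Recall that from inequality (\ref{Eq:2.7}) and the convergence of $(\Gamma(x_k))$ we obtained (\ref{Eq:2.8}), in particular
\begin{equation*}
\lim_{k\to\infty}\|(I-P_Q)(Ax_k)-(I-P_Q)(Ax_{k+1})\|^2=0
\quad\text{and}\quad
\lim_{k\to\infty}\Big(\|x_{k+1}\|_2-\frac{\langle x_k,x_{k+1}\rangle}{\|x_k\|_2}\Big)=0.
\end{equation*}
The second limit, combined with the elementary identity
$\|x_{k+1}\|_2-\frac{\langle x_k,x_{k+1}\rangle}{\|x_k\|_2}
=\frac{1}{2\|x_k\|_2}\big(\|x_{k+1}\|_2^2-2\langle x_k,x_{k+1}\rangle+\|x_k\|_2^2\big)
+\frac{1}{2\|x_k\|_2}\big(\|x_{k+1}\|_2^2-\|x_k\|_2^2\big)
-\frac{1}{2}(\|x_{k+1}\|_2-\|x_k\|_2)\cdot\frac{\|x_{k+1}\|_2+\|x_k\|_2-2\|x_k\|_2}{\|x_k\|_2}$, actually simplifies much more cleanly: writing $u=x_k/\|x_k\|_2$, one has $\|x_{k+1}\|_2-\langle u,x_{k+1}\rangle\ge 0$ with equality iff $x_{k+1}$ is a nonnegative multiple of $x_k$, and more quantitatively $\|x_{k+1}-\langle u,x_{k+1}\rangle u\|_2^2=\|x_{k+1}\|_2^2-\langle u,x_{k+1}\rangle^2\le 2\|x_{k+1}\|_2\big(\|x_{k+1}\|_2-\langle u,x_{k+1}\rangle\big)$. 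So provided $(\|x_{k+1}\|_2)$ stays bounded (which it does, by boundedness of $(x_k)$) we deduce that the component of $x_{k+1}$ orthogonal to $x_k$ tends to $0$; equivalently $x_{k+1}$ and $x_k$ become asymptotically parallel, with the radial part controlled by $\|x_{k+1}\|_2-\|x_k\|_2\to 0$ (which also follows from the second limit once the angle goes to zero). Thus, in all three cases, the task reduces to showing that $\|(I-P_Q)(Ax_k)-(I-P_Q)(Ax_{k+1})\|\to0$ together with the near-parallelism forces $\|x_{k+1}-x_k\|_2\to0$.

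For case (i), $Q=\{b\}$, we have $P_Q\equiv b$, so $(I-P_Q)(Ax_k)-(I-P_Q)(Ax_{k+1})=A(x_k-x_{k+1})$, and the first limit in (\ref{Eq:2.8}) gives $\|A(x_k-x_{k+1})\|\to0$. One then invokes the optimality condition: writing $e_k:=A(x_k-x_{k+1})$, subtracting the first-order conditions at $x_{k+1}$ and $x_k$ and pairing with $x_{k+1}-x_k$, the strongly-monotone-type term coming from $\partial\|\cdot\|_1+N_C$ controls $\langle \text{(monotone part)},x_{k+1}-x_k\rangle\ge 0$, while the remaining terms involve $\langle A^t e_k,x_{k+1}-x_k\rangle$ and the gap $\gamma\big(x_k/\|x_k\|_2-x_{k-1}/\|x_{k-1}\|_2\big)$; combined with the parallelism already derived this pins down $\|x_{k+1}-x_k\|_2\to0$. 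This is essentially the argument of \cite{ylhx}, so I would cite it and give the short reduction.

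For cases (ii) and (iii), the key extra input is positive homogeneity of the projection: when $Q$ is a closed convex cone, $P_Q$ is positively homogeneous of degree $1$ (and when $Q$ is a subspace it is linear), and more generally (iii) is exactly the hypothesis $P_{\alpha Q}(x)=\alpha P_Q(x)$. The idea is that since $x_{k+1}\approx t_k x_k$ with $t_k=\|x_{k+1}\|_2/\|x_k\|_2\to$ (bounded, and with $t_k-1$ not necessarily small unless we use the radial estimate), we get $Ax_{k+1}\approx t_k Ax_k$, hence by homogeneity $P_Q(Ax_{k+1})\approx t_k P_Q(Ax_k)$, so $(I-P_Q)(Ax_{k+1})\approx t_k(I-P_Q)(Ax_k)$; then the first limit $\|(I-P_Q)(Ax_k)-(I-P_Q)(Ax_{k+1})\|\to0$ becomes $|1-t_k|\,\|(I-P_Q)(Ax_k)\|\to0$, and one argues that either $\|(I-P_Q)(Ax_k)\|$ is bounded away from $0$ — forcing $t_k\to1$, hence $x_{k+1}-x_k\to0$ since they are asymptotically parallel with equal norms in the limit — or $(I-P_Q)(Ax_k)\to0$ along a subsequence, in which case that subsequence converges to a genuine solution of the split feasibility problem and one can terminate or analyze it directly. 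In the cone/subspace subcase one uses additionally that on a subspace $\|(I-P_Q)Ax_k-(I-P_Q)Ax_{k+1}\|=\|(I-P_Q)A(x_k-x_{k+1})\|$, reducing to the same kind of estimate as in case (i) on the complementary subspace, and one handles the component in $Q^\perp$ via homogeneity.

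The main obstacle is case (iii) (and the cone case when $Q$ is not a subspace): positive homogeneity only gives $P_Q(Ax_{k+1})=t_k P_Q(Ax_k)$ when $Ax_{k+1}=t_k Ax_k$ \emph{exactly}, whereas we only have it approximately, so one must control the propagation of the $o(1)$ error through $P_Q$ — here nonexpansiveness of $P_Q$ saves us, since $\|P_Q(Ax_{k+1})-t_kP_Q(Ax_k)\|=\|P_Q(Ax_{k+1})-P_Q(t_kAx_k)\|\le\|Ax_{k+1}-t_kAx_k\|=\|A\|\,\|x_{k+1}-t_kx_k\|\to0$ by the orthogonality estimate above — and the genuinely delicate point is ruling out the degenerate scenario where $t_k$ does not converge to $1$ while $(I-P_Q)(Ax_k)$ does not vanish; this requires a careful dichotomy argument on subsequences, using coercivity of $\Gamma$ (so limit points exist) and the stationarity characterization from the previous proposition to show the only consistent behaviour is $t_k\to1$, hence $\|x_{k+1}-x_k\|_2\to0$.
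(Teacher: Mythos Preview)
Your overall strategy coincides with the paper's: decompose $x_{k+1}$ as a scalar multiple of $x_k$ plus a vanishing remainder, invoke positive homogeneity of the projection together with its nonexpansiveness to turn the first limit in (\ref{Eq:2.8}) into $|\text{scalar}-1|\cdot\|(I-P_Q)(Ax_k)\|\to0$, and then argue by contradiction that the scalar tends to $1$. The paper works with the orthogonal-projection coefficient $c_k=\langle x_k,x_{k+1}\rangle/\|x_k\|_2^2$ rather than your $t_k=\|x_{k+1}\|_2/\|x_k\|_2$; this gives the exact splitting $x_{k+1}=c_kx_k+\varepsilon_k$ with $\varepsilon_k\perp x_k$ and makes $\|\varepsilon_k\|_2^2=\|x_{k+1}\|_2^2-\langle x_k,x_{k+1}\rangle^2/\|x_k\|_2^2\to0$ a one-line consequence of the second limit. (The paper also checks $c_k>0$ from the minimizing property of $x_{k+1}$ in (\ref{Alg:2.3}), which is needed for the positive-homogeneity step.) One claim in your sketch is wrong as stated: ``$\|x_{k+1}\|_2-\|x_k\|_2\to0$ follows from the second limit once the angle goes to zero'' is false --- the second limit only gives $\|x_{k+1}\|_2-c_k\|x_k\|_2\to0$, and getting $c_k\to1$ is precisely what the remainder of the argument is for.

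The genuine gap is your resolution of the degenerate branch. The paper does not use stationarity of limit points here; its contradiction is direct and rests on the strict decrease of $\Gamma$ from the preceding proposition, with the implicit choice $x_0=0$ (so that $\Gamma(x_0)=0$ in the cone case, and $\Gamma(x_0)=\tfrac12\|P_Q(0)\|^2$ in case~(iii)). If $c_k\not\to1$, then along a subsequence $\|(I-P_Q)(Ax_{k_\nu})\|\to0$; since $\Gamma(x)\ge\tfrac12\|(I-P_Q)(Ax)\|^2$ (because $\|x\|_1\ge\|x\|_2$), one obtains $\lim_\nu\Gamma(x_{k_\nu})\ge\Gamma(x_0)$, contradicting $\Gamma(x_{k_\nu})\le\Gamma(x_1)<\Gamma(x_0)$. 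Your proposed route via ``coercivity of $\Gamma$ and the stationarity characterization'' does not furnish this contradiction: a limit point $x^*$ with $Ax^*\in Q$ can perfectly well be stationary, so stationarity alone cannot exclude the branch $(I-P_Q)(Ax_{k_\nu})\to0$. For case~(i), both you and the paper ultimately defer to \cite{ylhx}; note however that the monotonicity sketch you outline would not close on its own, since $\partial\|\cdot\|_1+N_C$ is merely monotone, not strongly monotone, and pairing subtracted optimality conditions with $x_{k+1}-x_k$ gives no control of $\|x_{k+1}-x_k\|$ --- the actual argument in \cite{ylhx} again runs through the $c_k,\varepsilon_k$ decomposition.
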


\begin{proof}
{\small i) Indeed, in this case relation (\ref{Eq:2.7}) reduces to
\begin{equation}
\Gamma (x_k)-\Gamma (x_{k+1})\geq \frac{1}{2}\Vert
Ax_k-Ax_{k+1}\Vert^2+\gamma ( \Vert x_{k+1}\Vert_2-\Vert x_{k}\Vert_2-
\langle y_k, x_{k+1}-x_k\rangle)\geq 0,
\end{equation}
which is exactly the relation that gives the asymptotical regularity in \cite%
{ylhx}. Following the same lines of the proof of \cite{ylhx}-Proposition
3.1-(b), we obtain the desired result which is similar to the end of the
proof of iii) below.\newline
ii) In this setting the projection is a non-negative homogeneous function,
i.e.,
\begin{equation}
\forall \alpha\geq 0 \ \forall x\in I\!\!R^n \ \mbox{one has} \ P_Q(\alpha
x)=\alpha P_Q(x).
\end{equation}
At this stage, observe that this property holds true also for subspaces
since the projection is linear in this case and the proof will be the same.
Now, remember that $I-P_Q=P_{Q^*}$, where $Q^*:=\{y\in I\!\!R^n, \langle
y,x\rangle\leq 0 \ \forall x\in Q\}$ and set $c_k=\frac{\langle
x_k,x_{k+1}\rangle}{\Vert x_k\Vert^2}$ and $\varepsilon_k=x_{k+1}-c_kx_k$.
It suffices then to prove that $\lim_{k\rightarrow +\infty}\varepsilon_k=0$
and $\lim_{k\rightarrow +\infty}c_k=1$. A simple computation shows that
\begin{equation}
\Vert \varepsilon_k\Vert^2_2=\Vert x_{k+1}\Vert^2_2-\frac{\langle
x_k,x_{k+1}\rangle^2}{\Vert x_k\Vert^2}\rightarrow 0 \ \mbox{as} \
k\rightarrow +\infty,
\end{equation}
by virtue of the second limit in (\ref{Eq:2.8}). On the other hand using the
first limit in (\ref{Eq:2.8}), we can write
\begin{eqnarray}
0=\lim_{k\rightarrow +\infty }\Vert P_{Q^{\ast
}}(Ax_{k})-P_{Q^{\ast}}(Ax_{k+1})\Vert &=&\lim_{k\rightarrow +\infty }\Vert
P_{Q^{\ast }}(Ax_{k})-P_{Q^{\ast }}(A(c_{k}x_{k}+\varepsilon _{k}))\Vert
\notag \\
&=&\lim_{k\rightarrow +\infty }\Vert P_{Q^{\ast }}(Ax_{k})-P_{Q^{\ast
}}(A(c_{k}x_{k}))\Vert  \notag \\
&=&\lim_{k\rightarrow +\infty }\Vert P_{Q^{\ast }}(Ax_{k})-P_{Q^{\ast
}}(c_{k}A(x_{k}))\Vert  \notag \\
&=&\lim_{k\rightarrow +\infty }|c_{k}-1|\Vert P_{Q^{\ast }}(Ax_{k})\Vert ,
\end{eqnarray}
where we used the homogeneity of the projection and the fact that $c_k>0$.
The latter follows from the fact that $x_{k+1}$ is a minimizer in \ref%
{Alg:2.3}. More precisely, we can write
\begin{equation}
\frac{1}{2}\Vert P_{Q^*}(Ax_{k+1})\Vert^2_2+\gamma\Vert
x_{k+1}\Vert_1-\langle x_{k+1},\gamma \frac{x_k}{\Vert x_k\Vert_2}%
\rangle\leq \frac{1}{2}\Vert P_{Q^*}(A(0))\Vert^2_2+\gamma\Vert
0\Vert_1-\langle 0,\gamma \frac{x_k}{\Vert x_k\Vert_2}\rangle=0.
\end{equation}
From which we obtain that $c_k> 0$. Now, if $\lim_{k\rightarrow
+\infty}(c_k-1)\not=0$, then there exists a subsequence $( x_{k_\nu})$ such
that $\lim_{\nu\rightarrow +\infty}P_{Q^*}(Ax_{k_\nu})=0$. So, we have
\begin{equation}
\lim_{\nu\rightarrow +\infty}\Gamma(x_{k_\nu})\geq \lim_{\nu\rightarrow
+\infty}\frac{1}{2}\Vert P_{Q^*}(Ax_{k_\nu})\Vert^2=0=\Gamma(x_0),
\end{equation}
which contradicts the fact that
\begin{equation}
\Gamma(x_{k_\nu})\leq \Gamma(x_1)<\Gamma(x_0) \ \forall k_\nu\geq 1.
\end{equation}
Consequently, $\lim_{k\rightarrow +\infty}c_k=1$ and thus $%
\lim_{k\rightarrow +\infty}\Vert x_{k+1}-x_k\Vert=0$ which completes the
proof.\newline
iii) To begin with, a simple calculation shows that $P_{\alpha Q}(x)=\alpha
P_Q(\frac{1}{\alpha}x)$, see for example \cite[Lemma 2.1]{cgls17} with $U=I$
and $A=0$. Hence, we have $P_Q(c_k(Ax_k))=c_kP_{\frac{1}{c_k}Q}(Ax_k)$ and
thus
\begin{equation}
P_Q(c_k(Ax_k))=c_kP_{\frac{1}{c_k}Q}(Ax_k)= c_k\frac{1}{c_k}%
P_Q(Ax_k)=P_Q(Ax_k),
\end{equation}
by virtue of the homogeneous property of the projection and the fact that $%
c_k>0$. With this and the first limit in (\ref{Eq:2.8}) in hand, we can
successively write
\begin{eqnarray}
\lim_{k\rightarrow +\infty }\Vert (I-P_{Q})(Ax_{k})-(I-P_{Q})(Ax_{k+1})\Vert
&=&\lim_{k\rightarrow +\infty }\Vert
(I-P_{Q})(Ax_{k})-(I-P_{Q})(A(c_{k}x_{k}+\varepsilon _{k}))\Vert  \notag \\
&=&\lim_{k\rightarrow +\infty }\Vert
(I-P_{Q})(Ax_{k})-(I-P_{Q})(A(c_{k}x_{k}))\Vert  \notag \\
&=&\lim_{k\rightarrow +\infty }\Vert
(I-P_{Q})(Ax_{k})-(I-P_{Q})(c_{k}A(x_{k}))\Vert  \notag \\
&=&\lim_{k\rightarrow +\infty }|c_{k}-1|\Vert Ax_{k}\Vert =0.
\end{eqnarray}
Now, if $\lim_{k\rightarrow +\infty}(c_k-1)\not=0$, then there exists a
subsequence $( x_{k_\nu})$ of $(x_k)$ such that $\lim_{\nu\rightarrow
+\infty}Ax_{k_\nu}=0$. So, we have
\begin{equation}
\lim_{\nu\rightarrow +\infty}\Gamma(x_{k_\nu})\geq \lim_{\nu\rightarrow
+\infty}\frac{1}{2}\Vert (I-P_Q)(Ax_{k_\nu})\Vert^2=\frac{1}{2}\Vert
P_Q(0)\Vert^2=\Gamma(x_0),
\end{equation}
which contradicts the fact that
\begin{equation}
\Gamma(x_{k_\nu})\leq \Gamma(x_1)<\Gamma(x_0) \ \forall k_\nu\geq 1.
\end{equation}
Consequently, $\lim_{k\rightarrow +\infty}c_k=1$ and again the sequence $%
(x_k)$ is asymptotically regular.  }
\end{proof}

\begin{remark}
{\small Each DCA iteration requires solving a $l_1$-regularized split
feasibility subproblem of the form
\begin{equation}
\min_{x\in C}(\frac{1}{2}\Vert (I-P_Q)Ax\Vert^2_2+\langle x,v\rangle
+\gamma\Vert x\Vert_1),
\end{equation}
where $v\in I\!\!R^n$ is a constant vector. This problem can be done by the
two split proximal algorithms (coupling forward-backward and the
Douglas-Rachford algorithms) proposed in \cite{ylhx}, \cite{Moudafi16} and
also by the alternating direction method of multipliers (ADMM) following the
analysis developed in \cite{Tibshirani96} for the special case were $Q$ is a
singleton. The details will be given in the appendix.}
\end{remark}

\subsection{\protect\small Forward-backward splitting algorithm}

{\small To begin with, recall that the proximal mapping (or the Moreau
envelope) of a proper, convex and lower semicontinuous function $\varphi $
of parameter $\lambda >0$ is defined by
\begin{equation}
prox_{\lambda \varphi }(x):=arg\min_{v\in I\!\!R^{n}}\left\{ \varphi (v)+%
\frac{1}{2\lambda }\Vert v-x\Vert ^{2}\right\} ,\ x\in I\!\!R^{n},
\label{eq:Prox_def}
\end{equation}%
and that it has closed-form expression in some important cases. For example,
if $\varphi =\Vert \cdot \Vert _{1}$, then for $x\in I\!\!R^{n}$  }

{\small
\begin{equation}
prox_{\lambda \Vert \cdot \Vert _{1}}(x)=(prox_{\lambda |\cdot
|}(x_{1}),prox_{\lambda |\cdot |}(x_{n})),
\end{equation}%
where $prox_{\lambda |\cdot |}(x_{k})=sgn(x_{k})\max_{k=1,2,\cdot \cdot
\cdot n}\{|x_{k}|-\lambda ,0\}$.\newline
If $\varphi =i_{C}$, we have
\begin{equation}
prox_{\gamma \varphi }(x)=Proj_{C}(x):=arg\min_{z\in C}\Vert x-z\Vert .
\end{equation}
}

{\small For sake of simplicity and clarity, we set in what follows $%
C=I\!\!R^n$. Observe that when $\gamma>0$, the minimization problem (\ref{Eq:1.9}) can be
written as
\begin{equation}
\min_{x\in I\!\!R^n}\frac{1}{2\gamma}\Vert (I-P_Q)Ax\Vert^2_2+\Vert
x\Vert_1-\Vert x\Vert_2.
\end{equation}
It is worth mentioning that when $C\not=I\!\!R^n$, this requires to compute
the proximal operator of a sum, namely $prox_{i_C+\gamma_k(\Vert\cdot%
\Vert_1-\Vert\cdot\Vert_2)}$ which may be performed with Douglas-Rachford
iterations in the spirit of the analysis developed in \cite{cpp09} and \cite%
{Moudafi16}. \smallskip\newline
A closed-form solution of $prox_{\Vert x\Vert_1-\Vert x\Vert_2}$ was
proposed in \cite{ly}, in particular we have the following lemma.  }

\begin{lemma}
{\small Given $y\in I\!\!R^n$, $\lambda>0$ and setting $r(x)= \Vert
\cdot\Vert_1-\Vert \cdot\Vert_2$, we have\newline
(i) When $\lambda<\Vert y\Vert_\infty$, then
\begin{equation}
prox_{\lambda r}(y)=\frac{\lambda+\Vert prox_{\lambda \Vert
\cdot\Vert_1}y\Vert_2}{\Vert prox_{\lambda \Vert \cdot\Vert_1}y\Vert_2}%
prox_{\lambda\Vert \cdot\Vert_1}y.
\end{equation}
(ii) When $\lambda=\Vert y\Vert_\infty$, then $x^*\in prox_{\lambda r}(y)$
if and only if it satisfies $x^*_i=0$ if $\vert y_i\vert<\lambda, \Vert
x^*\Vert_2=\lambda$ and $x^*_iy_i\geq 0$ for all $i$.\newline
(iii) When $\lambda>\Vert y\Vert_\infty$, then $x^*\in prox_{\lambda r}(y)$
if and only if it is a 1-spare vector satisfying $x^*_i=0$ if $\vert
y_i\vert<\Vert y\Vert_\infty, \Vert x^*\Vert_2=\Vert y\Vert_\infty$ and $%
x^*_iy_i\geq 0$ for all $i$.\  }
\end{lemma}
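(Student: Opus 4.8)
The plan is to analyze the optimality conditions of the proximal subproblem
$$
prox_{\lambda r}(y)=\arg\min_{x\in I\!\!R^n}\Big\{\tfrac{1}{2\lambda}\|x-y\|_2^2+\|x\|_1-\|x\|_2\Big\},
$$
splitting into the three regimes according to how $\lambda$ compares with $\|y\|_\infty$. Since $r(x)=\|x\|_1-\|x\|_2$ is not convex, I would first note that the objective $\Phi_y(x):=\tfrac{1}{2\lambda}\|x-y\|_2^2+\|x\|_1-\|x\|_2$ is nonetheless coercive (the quadratic term dominates) and lower semicontinuous, so a minimizer exists; its characterization will come from the subdifferential inclusion $0\in\tfrac{1}{\lambda}(x-y)+\partial\|x\|_1-\partial\|x\|_2$ at any nonzero candidate, together with a direct comparison argument at $x=0$ and at $1$-sparse points. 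A useful structural observation to record up front is that for any minimizer one must have $x_i^*y_i\ge 0$ and $|x_i^*|\le|y_i|$ coordinatewise (otherwise replacing $x_i^*$ by $\max\{|x_i^*|,0\}\cdot\mathrm{sgn}(y_i)$ truncated at $|y_i|$ strictly decreases every term), and moreover $\mathrm{supp}(x^*)\subseteq\{i:|y_i|\ge\lambda\}$ when $\lambda\le\|y\|_\infty$, while $\mathrm{supp}(x^*)\subseteq\{i:|y_i|=\|y\|_\infty\}$ when $\lambda>\|y\|_\infty$. This is the observation that produces the ``$x_i^*=0$ if $|y_i|<\lambda$'' (resp.\ $<\|y\|_\infty$) clauses in (ii) and (iii).

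For case (i), $\lambda<\|y\|_\infty$: here $z:=prox_{\lambda\|\cdot\|_1}(y)$ (the soft-threshold of $y$) is nonzero, and I would show the minimizer is $x^*=\alpha z$ for the scalar $\alpha:=(\lambda+\|z\|_2)/\|z\|_2>1$. The clean way is to restrict attention to vectors of the form $x=tu$ with $u=z/\|z\|_2$ the unit direction of $z$ — one argues that any minimizer must lie along this ray because on the support $S$ of $z$ the $\ell_1$ term is linear and the quadratic-minus-$\ell_2$ part, after the support is fixed, is minimized along the direction of $z$; outside $S$ the coordinates are forced to zero. Plugging $x=tu$ reduces $\Phi_y$ to a one-dimensional strongly convex function of $t\ge 0$ (the $-\|x\|_2=-t$ term is linear here), whose stationarity $\tfrac{1}{\lambda}(t-\langle y,u\rangle)+\|z\|_1/\|z\|_2\cdot\mathrm{sgn}\text{-factor}-1=0$ solves to $t^*=\|z\|_2+\lambda$, i.e.\ $\alpha=t^*/\|z\|_2$, giving the stated formula. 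The key arithmetic identity to exploit is $\langle y,z\rangle = \|z\|_2^2+\lambda\|z\|_1$ and $z_i(y_i-z_i)=\lambda|z_i|$, which hold for the soft-threshold.

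For cases (ii) and (iii), the $\ell_2$ term is what forces the degenerate behaviour. When $\lambda=\|y\|_\infty$, along the direction $u$ supported on $\arg\max_i|y_i|$ the reduced one-dimensional objective becomes $\tfrac{1}{2\lambda}(t-c)^2+ (\text{const})\cdot t - t$ with the linear coefficients exactly cancelling the slope, so it is minimized at a boundary/indifference value; I would turn the first-order inclusion $0\in\tfrac1\lambda(x^*-y)+\partial\|x^*\|_1-x^*/\|x^*\|_2$ into the three conditions $x_i^*=0$ for $|y_i|<\lambda$, $\|x^*\|_2=\lambda$, $x_i^*y_i\ge 0$, checking both that any such $x^*$ attains the minimum value (which equals $\tfrac12\|y\|_2^2/\lambda\cdot$—one computes it explicitly and sees it is independent of the particular $x^*$) and that no other point does better. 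When $\lambda>\|y\|_\infty$, the same computation shows the penalty is so strong that only $1$-sparse vectors can be optimal: on any support of size $\ge 2$ the map $t\mapsto \|x\|_1-\|x\|_2$ has strictly positive marginal cost that the quadratic cannot offset, so the minimizer concentrates on a single coordinate $i$ with $|y_i|=\|y\|_\infty$, and minimizing the resulting scalar problem pins $\|x^*\|_2=\|y\|_\infty$ with the sign constraint. I expect the main obstacle to be the nonconvexity bookkeeping in case (i) — namely rigorously justifying that a global minimizer must lie on the ray through the soft-thresholded vector rather than merely being a stationary point there; handling this cleanly requires the support-localization step and a convexity argument on the fixed-support restriction, after which everything reduces to one-variable calculus.
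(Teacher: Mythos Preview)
The paper does not actually prove this lemma: it is quoted verbatim from Lou and Yan \cite{ly} (introduced in the text as ``A closed-form solution of $prox_{\Vert x\Vert_1-\Vert x\Vert_2}$ was proposed in \cite{ly}, in particular we have the following lemma''), and no argument is supplied. So there is no in-paper proof to compare against; what you have written is a genuine proof outline where the authors give only a reference.

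Your route---existence via coercivity, the sign/support localization, and then, on the identified support with fixed signs, reducing to a one-dimensional problem along the ray through $z=prox_{\lambda\Vert\cdot\Vert_1}y$---is the natural one and matches the approach in \cite{ly}. Two remarks on the sketch. First, the clean reason the minimizer in case~(i) lies on the ray through $z$ is exactly the stationarity computation you allude to: on a fixed support $S=\{i:|y_i|>\lambda\}$ with the induced signs, the first-order condition $\tfrac{1}{\lambda}(x-y_S)+s-\tfrac{x}{\Vert x\Vert_2}=0$ rearranges to $x\big(\tfrac{1}{\lambda}-\tfrac{1}{\Vert x\Vert_2}\big)=\tfrac{1}{\lambda}z$, so every critical point on that orthant is a scalar multiple of $z$; this turns the ``main obstacle'' you flag into a short calculation once the support localization is in place. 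Second, your blanket claim $\operatorname{supp}(x^*)\subseteq\{i:|y_i|\ge\lambda\}$ whenever $\lambda\le\Vert y\Vert_\infty$ is the step that deserves the most care: the $-\Vert x\Vert_2$ term can in principle reward spreading mass, and the argument needs the pairing with $|x_i^*|\le|y_i|$ (which you state) together with the first-order condition on an active coordinate to rule out $|y_j|<\lambda$ with $x_j^*\neq 0$. With those two points tightened, the remainder of the sketch (the one-variable solve $t^*=\Vert z\Vert_2+\lambda$, and the degenerate analyses for (ii)--(iii)) is correct.
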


{\small By setting $l(x)=\frac{1}{2\gamma }\Vert (I-P_{Q})Ax\Vert _{2}^{2}$,
the forward-backward splitting algorithm can be expressed as follows
\begin{equation}
x_{k+1}\in prox_{\lambda r}(x_{k}-\lambda \nabla l(x_{k})).  \label{eq:FB}
\end{equation}%
Since the two assumptions of \cite[Theorem 3]{ly} are satisfied, namely the
coerciveness of the objective function and differentiability of the function
$l$ with Lipschitz-continuity of its gradient, a direct application of this
Theorem leads to the following convergence result:  }

\begin{proposition}
{\small If $\lambda<\frac{\gamma}{\Vert A\Vert^2}$, then the objective
values are decreasing and there exists a subsequence of $(x_k)$ that
converges to a stationary point. Furthermore, any limit point of $(x_k)$ is
a stationary point.  }
\end{proposition}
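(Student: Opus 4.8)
The plan is to reduce the statement to a direct citation of \cite[Theorem 3]{ly}, after verifying that its hypotheses hold in our setting. Writing $l(x) = \frac{1}{2\gamma}\Vert (I-P_Q)Ax\Vert_2^2$ and $r(x) = \Vert x\Vert_1 - \Vert x\Vert_2$, our minimization problem is $\min_{x\in I\!\!R^n} l(x) + r(x)$, which is precisely the form treated by the forward-backward scheme \eqref{eq:FB}. The first hypothesis of the cited theorem is coercivity of the objective $l + r$; this follows from Proposition 2.1(i), since $l \geq 0$ and $\Gamma$ (with $C = I\!\!R^n$) is already shown to be coercive, so $l + r$ has bounded sublevel sets. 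The second hypothesis is that $l$ is differentiable with a Lipschitz-continuous gradient; here $\nabla l(x) = \frac{1}{\gamma} A^t(I-P_Q)Ax$, and since $I - P_Q$ is firmly nonexpansive (hence $1$-Lipschitz), $\nabla l$ is Lipschitz with constant $\Vert A\Vert^2/\gamma$.

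Next I would identify the step-size condition. The convergence theorem of \cite{ly} requires the step-size $\lambda$ to be strictly smaller than the reciprocal of the Lipschitz constant of $\nabla l$, i.e. $\lambda < \gamma / \Vert A\Vert^2$, which is exactly the hypothesis in the statement. Under this condition, the cited theorem gives (a) monotone decrease of the objective values $l(x_k) + r(x_k)$ along the iterates, (b) existence of a convergent subsequence whose limit is a stationary point of $l + r$, and (c) that every limit point of $(x_k)$ is a stationary point. Here ``stationary point'' means a point $x^*$ satisfying the inclusion $0 \in \nabla l(x^*) + \partial(\Vert\cdot\Vert_1)(x^*) - \partial(\Vert\cdot\Vert_2)(x^*)$, using the Fr\'echet/limiting subdifferential calculus appropriate for the nonconvex function $r$; this is consistent with the stationarity notion in Proposition 2.1(iii) (specialized to $C = I\!\!R^n$).

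The one point requiring a little care — and the main obstacle — is checking that the proximal step in \eqref{eq:FB} is well-defined and that Lemma 2.4 supplies it: $\text{prox}_{\lambda r}$ is set-valued when $\lambda \geq \Vert y\Vert_\infty$, so \eqref{eq:FB} is genuinely an inclusion, and one must confirm that the convergence analysis of \cite{ly} accommodates this multivaluedness (it does, since their descent lemma only uses that $x_{k+1}$ achieves the proximal minimum, not uniqueness). Beyond that, the argument is a verification-and-invoke: confirm coercivity via Proposition 2.1, confirm the Lipschitz constant of $\nabla l$, match the step-size bound, and quote \cite[Theorem 3]{ly} verbatim for conclusions (a)–(c).

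\begin{proof}
{\small Set $l(x)=\frac{1}{2\gamma}\Vert (I-P_Q)Ax\Vert_2^2$ and $r(x)=\Vert x\Vert_1-\Vert x\Vert_2$, so that \eqref{Eq:1.9} with $C=I\!\!R^n$ reads $\min_{x\in I\!\!R^n} l(x)+r(x)$. The function $l$ is differentiable with $\nabla l(x)=\frac{1}{\gamma}A^t(I-P_Q)Ax$; since $I-P_Q$ is firmly nonexpansive, hence $1$-Lipschitz, $\nabla l$ is Lipschitz continuous with constant $\Vert A\Vert^2/\gamma$. Moreover $l+r$ is coercive: indeed $l\geq 0$ and, by Proposition 2.1(i) with $C=I\!\!R^n$, $l+r=\Gamma$ has bounded sublevel sets. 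Thus the two hypotheses of \cite[Theorem 3]{ly} are fulfilled. Since $\lambda<\gamma/\Vert A\Vert^2$ is precisely the requirement that $\lambda$ be smaller than the reciprocal of the Lipschitz constant of $\nabla l$, that theorem applies directly to the iteration \eqref{eq:FB} and yields: the objective values $l(x_k)+r(x_k)$ are decreasing; there exists a subsequence of $(x_k)$ converging to a stationary point of $l+r$; and every limit point of $(x_k)$ is a stationary point, i.e. satisfies $0\in A^t(I-P_Q)Ax^*+\gamma\big(\partial\Vert x^*\Vert_1-\partial\Vert x^*\Vert_2\big)$.}
\end{proof}
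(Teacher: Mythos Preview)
Your proposal is correct and follows essentially the same approach as the paper: the paper's own justification is simply to note that the two hypotheses of \cite[Theorem 3]{ly} (coercivity of the objective and Lipschitz-continuity of $\nabla l$) are satisfied and then invoke that theorem directly. Your write-up is in fact more detailed than the paper's, explicitly computing the Lipschitz constant $\Vert A\Vert^2/\gamma$ and matching it to the step-size bound; the only cosmetic slip is that with $C=I\!\!R^n$ one has $\Gamma=\gamma(l+r)$ rather than $\Gamma=l+r$, but this does not affect the coercivity argument.
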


\subsection{\protect\small Mine-Fukushima Algorithm}

{\small At this stage, we would like to mention that in the case where $C$
is strictly convex and that we can generate from an initial point $x_{0}$ a
sequence $x_{k}$ such that $x_{k}\not=0$ for all $k\in I\!\!N$, then the
Algorithm introduced by Mine-Fukushima in \cite{mf81} is applicable. Indeed,
problem (\ref{Eq:1.9}) can be written as
\begin{equation}
\min_{x\in I\!\!R^{n}}(\phi (x):=f(x)+g(x)),  \label{Eq:2.14}
\end{equation}%
with $f(x)=\frac{1}{2}\Vert (I-P_{Q})Ax\Vert _{2}^{2}-\gamma \Vert x\Vert
_{2}$ and $g(x)=\gamma \Vert x\Vert _{1}+i_{C}(x)$. Observe that in this
case, we have for $x\not=0$, that $\nabla f(x)=A^{t}(I-P_{Q})Ax-\gamma \frac{%
x}{\Vert x\Vert _{2}}$ and $\partial g(x)=\partial \Vert x\Vert
_{1}+N_{C}(x) $.  }

{\small So \cite[Algorithm 2.1]{mf81} take the following from:\newline
}

{\small \textbf{Algorithm (Mine-Fukushima):}\label{Alg:MF}\newline
Step 1. Let $x_{0}$ be any initial point. Set $k=0$, and go to step 2.%
\newline
Step 2. If $-\nabla f(x_{k})\in \partial g(x_{k})$, stop. Otherwise, go to
Step 3.\newline
Step 3. Find a minimum $\tilde{x}_{k}$ of
\begin{equation}
\min_{x\in C}\left( \left\langle x,A^{t}(I-P_{Q})Ax_{k}-\gamma \frac{x_{k}}{%
\Vert x_{k}\Vert _{2}}\right\rangle +\gamma \Vert x\Vert _{1}\right) ,
\label{Eq:2.15}
\end{equation}%
and go to Step 4.\newline
Step 4. Find
\begin{equation}
x_{k+1}=\lambda _{k}\tilde{x}_{k}+(1-\lambda _{k})x_{k},
\end{equation}%
such that $\lambda _{k}\geq 0$ and
\begin{equation}
\phi (x_{k+1})\leq \phi (\lambda \tilde{x}_{k}+(1-\lambda )x_{k})\
\mbox{for
all }\ \lambda \geq 0.
\end{equation}%
Set $k=k+1$, and go to Step 2. \smallskip \newline
Observe that solving (\ref{Eq:2.15}) in Step 3 is equivalent to finding $%
\tilde{x}_{k}$ such that $-\nabla f(x_{k})\in \partial g(\tilde{x}_{k})$.
\smallskip \newline
Since $\phi $ is coercive in our case, a direct application of \cite[Theorem
3.]{mf81} yields the following result.  }

\begin{proposition}
{\small The sequence $(x_{k})$ generated by the Mine-Fukushima Algorithm
contains a subsequence which converges to a critical point $x^{\ast }$ of (%
\ref{Eq:2.14}), namely
\begin{equation}
-A^{t}(I-P_{Q})Ax^{\ast }-\gamma \frac{x^{\ast }}{\Vert x^{\ast }\Vert _{2}}%
\in \partial \Vert x^{\ast }\Vert _{1}+N_{C}(x^{\ast }).
\end{equation}
}
\end{proposition}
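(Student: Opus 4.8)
The plan is to recognize the final Proposition as a direct specialization of the convergence theorem for the Mine--Fukushima scheme, \cite[Theorem 3]{mf81}, applied to the decomposition $\phi = f + g$ with $f(x)=\frac{1}{2}\Vert (I-P_Q)Ax\Vert_2^2-\gamma\Vert x\Vert_2$ and $g(x)=\gamma\Vert x\Vert_1+i_C(x)$ already set up in the text. First I would verify the hypotheses required by that theorem: $g$ is proper, convex and lower semicontinuous (a norm plus the indicator of a nonempty closed convex set); $f$ is differentiable on the region where the iterates live, with $\nabla f(x)=A^t(I-P_Q)Ax-\gamma x/\Vert x\Vert_2$ for $x\neq 0$ (using that $x\mapsto \frac12\Vert(I-P_Q)Ax\Vert_2^2$ is smooth because $P_Q$ is firmly nonexpansive hence $I-P_Q$ is $1$-Lipschitz, and that $\Vert\cdot\Vert_2$ is differentiable away from the origin); and, crucially, that $\phi$ is coercive. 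Coercivity is exactly Proposition~2.1(i): since $\Vert x\Vert_1-\Vert x\Vert_2\geq 0$ for all $x$ and the quadratic term is nonnegative, we have $\phi(x)=\Gamma(x)\to+\infty$ as $\Vert x\Vert_2\to+\infty$ (on $C$; outside $C$ it is $+\infty$ by the indicator), so the sublevel sets are bounded.

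Next I would run the Mine--Fukushima iteration and extract its conclusions. By Step~4 the sequence $(\phi(x_k))$ is nonincreasing, and coercivity forces $(x_k)$ to remain in the bounded sublevel set $\{x : \phi(x)\leq\phi(x_0)\}$; hence $(x_k)$ is bounded and admits a convergent subsequence $x_{k_\nu}\to x^\ast$. The descent guarantee of Step~4 together with the strict-convexity-type argument in \cite{mf81} gives $\tilde x_k - x_k \to 0$ along the relevant subsequence, so in the limit the stationarity relation $-\nabla f(x_{k_\nu})\in\partial g(\tilde x_{k_\nu})$ (the reformulation of Step~3 noted just before the Proposition) passes to $-\nabla f(x^\ast)\in\partial g(x^\ast)$. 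Here I would invoke the closedness of the graph of the maximal monotone operator $\partial g = \gamma\partial\Vert\cdot\Vert_1+N_C$ and the continuity of $\nabla f$ near $x^\ast$ (which requires $x^\ast\neq 0$, handled as in the proof of Proposition~2.1 by noting $\phi(x_1)<\phi(x_0)$ forces the iterates away from $0$, or by the running assumption $x_k\neq 0$ stated in the setup). Substituting the formulas for $\nabla f$ and $\partial g$ yields precisely
\[
-A^t(I-P_Q)Ax^\ast-\gamma\frac{x^\ast}{\Vert x^\ast\Vert_2}\in\partial\Vert x^\ast\Vert_1+N_C(x^\ast),
\]
which is the claimed critical-point condition for \eqref{Eq:2.14}.

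The main obstacle is the passage to the limit in the subdifferential inclusion, specifically ensuring $x^\ast\neq 0$ so that $\nabla f$ is well-defined and continuous at $x^\ast$, and controlling $\tilde x_{k_\nu}-x_{k_\nu}\to 0$; this is where the strict convexity of $C$ and the structure of the line search in Step~4 of \cite{mf81} enter, and it is the only place where one cannot simply quote coercivity. Everything else — boundedness, monotone decrease, existence of a convergent subsequence — is routine once coercivity (Proposition~2.1(i)) is in hand, and the bulk of the work has in effect already been done either in the earlier proposition or inside \cite[Theorem 3]{mf81}, so the proof reduces to checking hypotheses and bookkeeping the identifications $f,g,\nabla f,\partial g$.
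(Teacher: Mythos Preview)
Your proposal is correct and follows essentially the same route as the paper: the paper's ``proof'' is the single sentence preceding the proposition, namely that $\phi$ is coercive and hence \cite[Theorem 3]{mf81} applies directly. You have simply unpacked that citation---verifying the standing hypotheses (strict convexity of $C$, $x_k\neq 0$, proper convex lsc $g$, differentiability of $f$ away from $0$), invoking coercivity via Proposition~2.1(i), and sketching the subsequence/graph-closure argument that lives inside \cite{mf81}---but the strategy and the key ingredient are identical.
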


\begin{remark}
{\small The assumption of strict convexity on the convex set $C$ can be
removed by applying the following process: for some $\mu >0$ consider the
following decomposition of the objective function $\phi $: $\phi =\tilde{f}+%
\tilde{g}$ with $\tilde{f}(x)=f(x)-\mu \frac{\Vert x\Vert _{2}^{2}}{2}$ and $%
g$ by $\tilde{g}(x)=g(x)+\mu \frac{\Vert x\Vert _{2}^{2}}{2}$. Relation (\ref%
{Eq:2.15}) becomes
\begin{equation}
\min_{x\in C}\left( \left\langle x,A^{t}(I-P_{Q})Ax_{k}+\mu x_{k}-\gamma
\frac{x_{k}}{\Vert x_{k}\Vert _{2}}\right\rangle +\gamma \Vert x\Vert
_{1}+\mu \frac{\Vert x\Vert _{2}^{2}}{2}\right) .
\end{equation}
}
\end{remark}

\section{\protect\small Numerical experiments}

{\small \label{Sec:Num} In this section, we present two numerical examples
demonstrating the performances of our proposed schemes. In both experiments
we wish to solve the linear system of equations: $Ax=b$ with $A\in
I\!\!R^{120\times 512}$. In the first example we generate 50 random problems
from a normal distribution with mean zero and variance one. For the second
experiment we choose a problem in the field of compressed sensing, which
consists of recovering a sparse signal $x\in I\!\!R^{512}$ with 50 non zero
elements from $120$ measurements. In this case we also include noise, that
is, we wish to solve $Ax=b+\varepsilon $, where $\varepsilon $ is the noise
with bounded variance $10^{-4}$.  }

{\small For the comparison of our proposed schemes we decided also to
include Byrne CQ algorithm \cite{Byrne02, Byrne04} and Qu and Xiu \cite{qx05}
modified CQ algorithm. Byrne CQ algorithm is designed to solve $Ax=b$ and
hence we choose $C=I\!\!R_{+}^{n}$ and $Q=\{b\}$. The CQ iterative step
reads as follows
\begin{equation}
x_{k+1}=P_{C}(x_{k}-\widehat{\gamma} A^{t}(I-P_{Q})Ax_{k})
\end{equation}%
and for the specific choice of $C$\ and $Q$ it translates to%
\begin{equation}
x_{k+1}=\left(x_{k}-\widehat{\gamma} A^{t}(Ax_{k}-b)\right)_+  \label{eq:CQ}
\end{equation}%
and it is denoted in our plots (Figures \ref{fig:firstex} and \ref%
{fig:secondex}) as \textbf{CQ}.\newline
\medskip Qu and Xiu \cite{qx05} modified CQ algorithm (see also Tang et al.
\cite{tl16}) uses subgradient (elements of the subdifferential set) projection onto super-sets $C\subseteq C_{k}$
and $Q\subseteq Q_{k}$ instead of the orthogonal projections onto $C$ and $Q$%
. The algorithm also make use of adaptive step-size $\alpha _{k}$\ instead
of fixed $\widehat{\gamma}$ as in the CQ algorithm. The algorithm is as follows.%
\newline
}

{\small \textbf{Algorithm (Modified CQ):}\label{Alg:MCQ}\newline
Step 1. Given constants $l,\mu \in (0,1)$ and choose $x_{0}$$%
\in I\!\!R^{n}$. Set $k=0$, and go to step 2.\newline
Step 2. Given the current iterate $x_{k}$, let
\begin{equation}
\overline{x_{k}}=P_{C_{k}}(x_{k}-\alpha _{k}A^{t}(I-P_{Q})Ax_{k})
\end{equation}%
where $\alpha _{k}l^{m_{k}}$ and $m_{k}$ is the smallest nonnegative integer
$m$ such that
\begin{equation}
\Vert A^{t}(I-P_{Q})Ax_{k}-A^{t}(I-P_{Q})A\overline{x_{k}}\Vert \leq \mu
\frac{\Vert x_{k}-\overline{x_{k}}\Vert }{\alpha _{k}}.
\end{equation}%
And the next iterate is calculated via
\begin{equation}
x_{k+1}=P_{C_{k}}(x_{k}-\alpha _{k}A^{t}(I-P_{Q})A\overline{x_{k}}).
\end{equation}%
Set $k=k+1$, and go to Step 2. \smallskip \newline
}

{\small While for the CQ algorithm we wish to solve $Ax=b$, for the modified
CQ algorithm we wish to the consider $Ax=b$\ with $l1$\ regularization, this
is known as the LASSO problem \cite{Tibshirani96} (strongly related to the
Basis Pursuit denosing problem \cite{chens1998})
\begin{equation}
\min_{x\in C}\frac{1}{2}\Vert Ax-b\Vert _{2}^{2}\quad \mbox{subject to}\
\Vert x\Vert _{1}\leq t
\end{equation}%
where $t>0$ is a given constant. So in this case we choose $C=\{x\mid \Vert
x\Vert _{1}\leq t\}$\ and $Q=\{b\}$. We define the convex function $%
c(x)=\Vert x\Vert _{1}-t$\ and denote the level set $C_{k}$\ by,
\begin{equation}
C_{k}=\{x\mid c(x^{k})+\langle \xi _{k},x-x^{k}\rangle \leq 0\},
\end{equation}%
where $\xi _{k}\in \partial c(x_{k})$ is an element (subgradient) from the subdifferential of $c $ at $x_{k}$. The orthogonal projection
onto $C_{k}$\ can be calculated by the following,
\begin{equation}
P_{C_{k}}(y)=%
\begin{cases}
y, & \text{if }c(x_{k})+\langle \xi _{k},y-x_{k}\rangle \leq 0, \\
y-\frac{c(x_{k})+\langle \xi _{k},y-x_{k}\rangle }{\Vert \xi _{k}\Vert ^{2}}%
\xi _{k}, & \text{otherwise}.%
\end{cases}%
\end{equation}%
Following the definition of the subdifferential set $\partial c(x_{k})$ (\ref{Eq:Subdiff}), we choose subgradient $\xi _{k}\in \partial c(x_{k})$ as
\begin{equation}
(\xi _{k})_i=
\begin{cases}
1, & (x_{k})_i>0, \\[0pt]
0, & (x_{k})_i\neq 0, \\
-1, & (x_{k})_i<0.%
\end{cases}%
\end{equation}%
This algorithm, Algorithm \ref{Alg:MCQ},\ is denoted in our plots (Figures %
\ref{fig:firstex} and \ref{fig:secondex}) as \textbf{Mod CQ}\textbf{\ }($%
\mathbf{l1}$\textbf{-con.}). }

{\small Our schemes, DC (difference of convex) algorithm (DCA)-iterative
step (\ref{Alg:2.3}), the forward-backward (FB) algorithm-iterative step (%
\ref{eq:FB}) and the Mine and Fukushima algorithm-Algorithm \ref{Alg:MF} are
denoted in our plots (Figures \ref{fig:firstex} and \ref{fig:secondex}) as
\textbf{DCA} ($\mathbf{l_{1}}$-$\mathbf{l_{2}}$), \textbf{FB} and \textbf{%
Mine and Fukushima}, respectively. The stopping criterion for all schemes is
either 1000 iterations or until $\Vert x_{k+1}-x_{k}\Vert <10^{-5}$ is
reached. In the experiments we choose arbitrary the regularization parameter $\gamma$ to be $0.6$. We noticed that this choice produce good results, and this also affects the sensitivity of the solution.
All computations were performed using MATLAB R2015a on an Intel
Core i5-4200U 2.3GHz running 64-bit Windows. }

{\small Next the two numerical illustrations are presented. In Figures \ref%
{fig:firstex} and \ref{fig:secondex} we present the performances of our
schemes as well as the CQ and the Modified CQ algorithms for random data and
sparse signal recovering, respectively. As explained above the algorithms
are designed to solve $Ax=b$\ with and without different types of
regularizations. In Figure \ref{fig:firstex}, we present the results for the
50 random generated problems, in each plot the different colors represent
the quintiles with respect to each of the 50 problems and the red graph is
the experiments median. It can be seen that most methods differ in their
"warmup" stages, that is, in the first number of iterations, and all
converge "quite fast", just within a few iterations. We see that the
"warmup" stage in the DCA is the most significant and visible, we suspect
that this is probably due to the need to solve subproblems during each iteration. This would probably play an essential role as a computational aspect for large scale problems. In Figure \ref{fig:secondex}, we test the
5 schemes performances for recovering a 50-sparse signal $x\in I\!\!R^{512}$
from $120$ measurements. Here, when only the resulting recovered signal is
presented, it can be seen that the DCA, FB and Mine and Fukushima algorithms
recover the exact signal while both the CQ and the modified CQ
algorithms contain errors, and as expected the modified CQ algorithm
generate a slightly better signal, probably due to the $\mathbf{l1}$%
-regularization. We would like to emphasize that the main goal of this work
is to introduce and survey some approaches for solving $Ax=b$\ with
different variants of regularizations, we dont wish to further investigate
and analyze the computational performances of the proposed schemes, and
hence wish to leave our above explanations as compact as possible. An
interesting direction for future study is indeed a computational comparison
between different types of regualrizations. We believe that deep insights in
this case can be derived, only when large problems are considered, since
then the subproblems solved per each iteration in the related algorithms,
might play an essential role with respect to the computational efforts and
convergence rate, and moreover, this could emphasize and suggest the
applicability and advantages of the different methods and in particular the
usage of one regularization over another. }

{\small
\begin{figure}[htp]
\begin{center}
{\small \includegraphics[width=12cm,height=8cm]{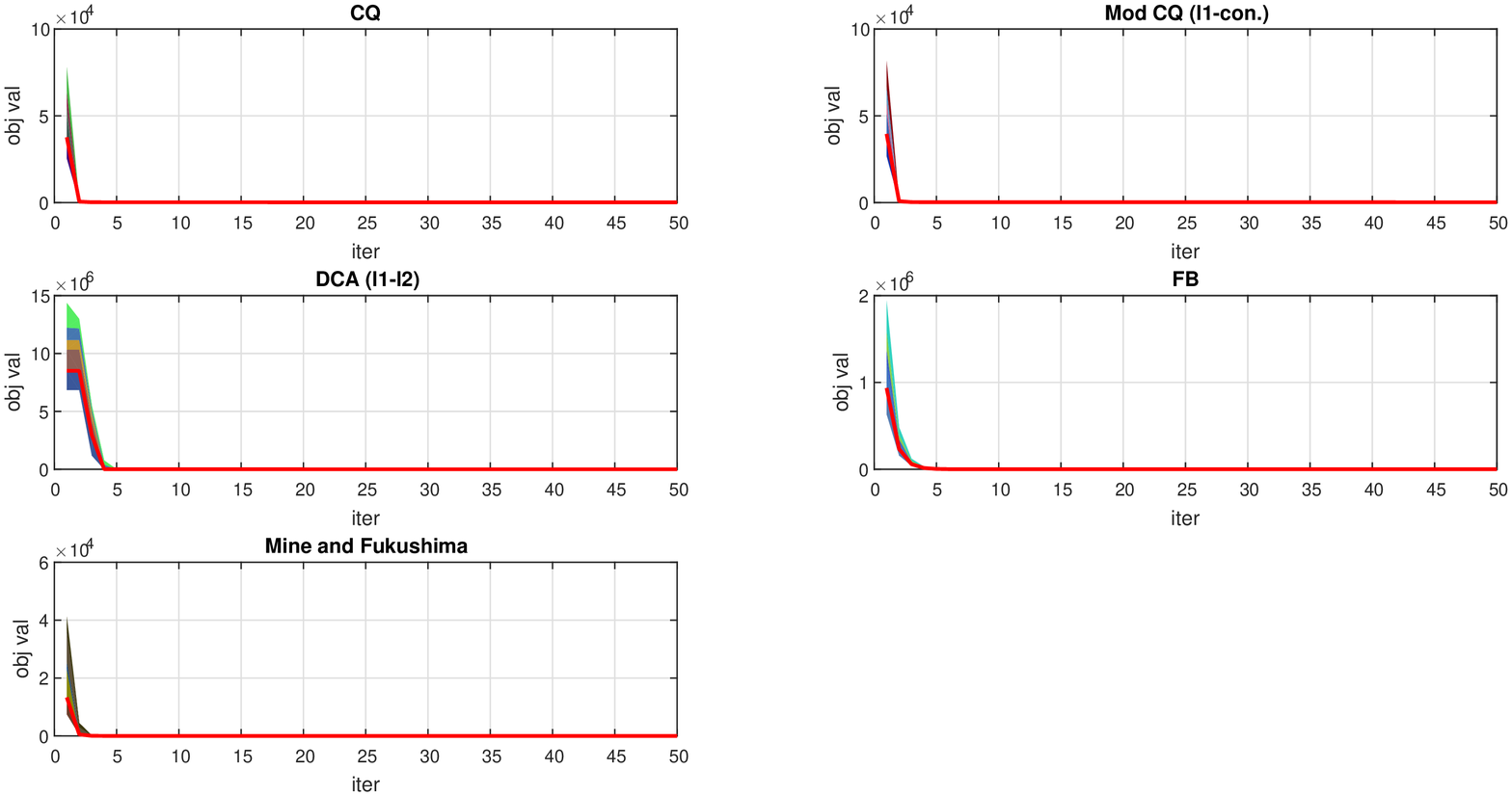}  }
\end{center}
\par
{\small \  }
\caption{Testing our proposed algorithms for 50 random problems $Ax=b$,
where $A\in I\!\!R^{120\times 512}$.}
\label{fig:firstex}
\end{figure}
}

{\small
\begin{figure}[htp]
\begin{center}
{\small \includegraphics[width=12cm,height=8cm]{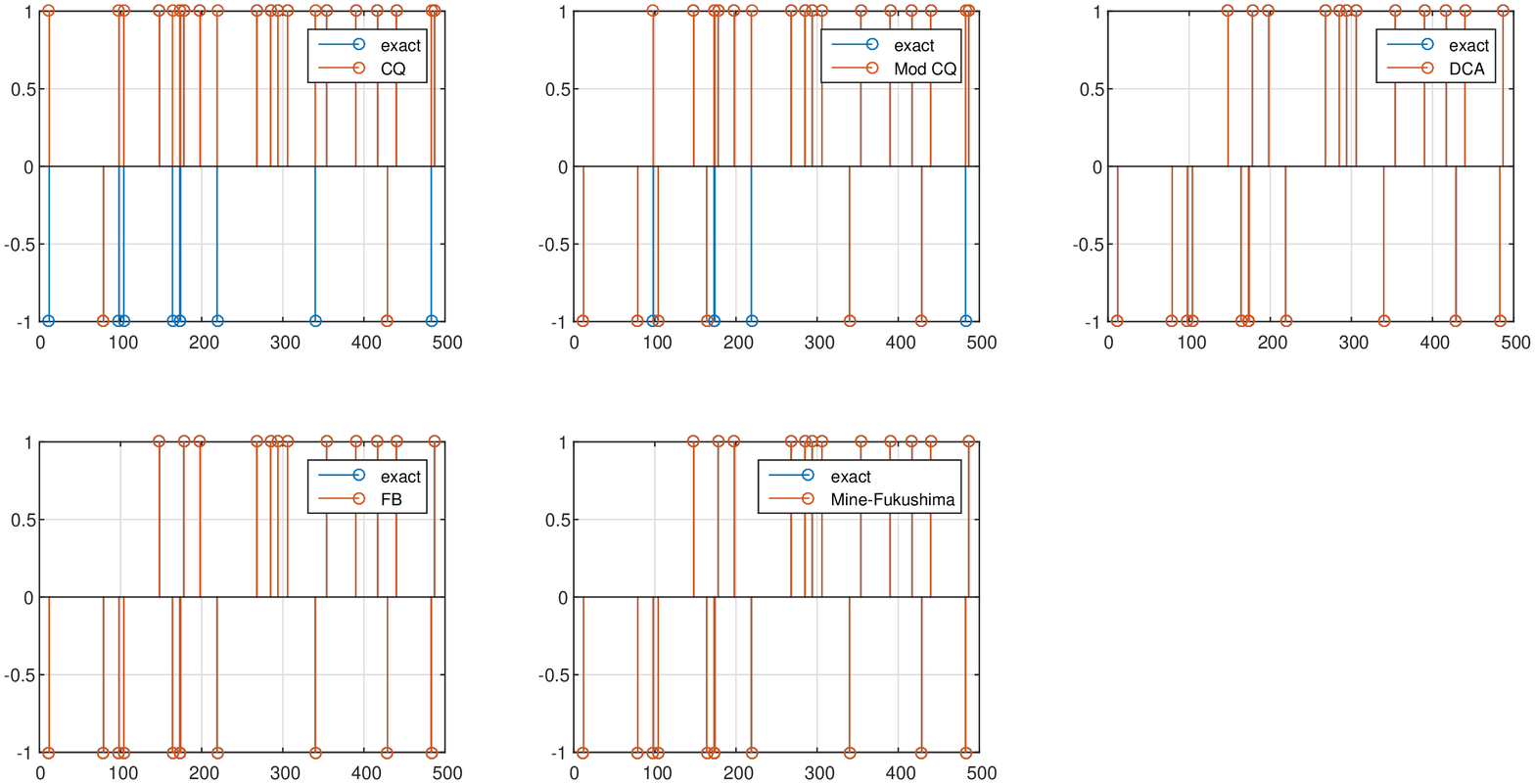}  }
\end{center}
\par
{\small \  }
\caption{Testing our proposed algorithms for recovering a 50-sparse signal $%
x\in I\!\!R^{512}$ from $120$ measurements.}
\label{fig:secondex}
\end{figure}
}

{\small \newpage }

\section*{\protect\small Concluding remarks}

{\small In this paper we investigate split feasibility problems under a
nonconvex Lipschitz continuous metric instead of conventional methods such
as $l_{1}$ or $l_{1}-l_{2}$ minimization, for example in \cite{aasnx13}. We
present and analyze the convergence to a stationary point of an iterative
minimization method based on DCA (difference of convex algorithm), see for
example \cite{Natarajan95}). Furthermore, relying on a proximal operator for
$l_{1}-l_{2}$ as well as on an algorithm proposed by Mine and Fukushima for
minimizing the sum of a convex function and a differentiable one, two
additional algorithms are presented and their convergence properties are
discussed. }

{\small Since each iteration of the DCA requires to solve an inner $l_{1}$%
-regularized split feasibility subproblem, we present some algorithms
designed for that purpose in the appendix. Observe that the DCA presented here can
be extended to split fixed-point problems governed by firmly
quasi-nonexpansive mappings. We would also like to emphasize that much
attention has been paid not only to the sparsity of solutions but also to
the structure of this sparsity, which may be relevant in some problems and
which provides another avenue for inserting prior knowledge into the
problem. We would like to mention that an interesting regularizer is the
OSCAR one which has the following form
\begin{equation}
r_{OSCAR}(x)=\gamma _{1}\Vert x\Vert _{1}+\gamma _{2}\sum_{i<j}\max
\{|x_{i}|,|x_{j}|\}.
\end{equation}%
Due to $l_{1}$ term and the pairwise $l_{\infty }$ penalty, the components
are encouraged to standard spare and pairwise similar magnitude and has been
extensively applied in various feature grouping tasks and outperforms other
models. We refer to the interesting paper \cite{zf14} where the OSCAR
regularizer is used via its proximity mapping, a work that deserves to be
more developed. }

\section*{\protect\small Acknowledgment}

{\small We wish to thank the anonymous referee for the thorough analysis and
review, all the comments and suggestions helped tremendously in improving
the quality of this paper and made it suitable for publication. }

{\small The first author wish to thank his team "Image \& Mod\`{e}le" and
the Computer Science \& System Laboratory (L.I.S) of Aix-Marseille
University. The second author work is supported by the EU FP7 IRSES program
STREVCOMS, grant no. PIRSES-GA-2013-612669.\medskip  }

\section{\protect\small Appendix}

{\small \label{Sec:App} Each DCA iteration requires solving a $l_1$%
-regularized split feasibility subproblem of the form  }

{\small
\begin{equation}
\min_{x\in C}\frac{1}{2}\Vert (I-P_Q)Ax\Vert^2_2+\langle x,v\rangle
+\gamma\Vert x\Vert_1,  \label{Eq:3.1}
\end{equation}
where $v\in I\!\!R^n$ is a constant vector. This problem can be done, for
example, by the two split proximal algorithms (coupling forward-backward and
the Douglas-Rachford algorithms).  }

\subsection{\protect\small Insertion of a forward-backward step in the
Douglas-Rachford algorithm}

{\small To apply the Douglas-Rachford algorithm when $g_{1}=\gamma \Vert
\cdot \Vert _{1}$ and $g_{2}=\frac{1}{2}\Vert (I-P_{Q})A(\cdot )\Vert
_{2}^{2}+\langle \cdot,v\rangle +i_{C}$, we need to determine their proximal
mappings. The main difficulty lies in the computation of the second one,
namely $prox_{\kappa \frac{1}{2}\Vert (I-P_{Q})A(\cdot )\Vert
_{2}^{2}+\langle \cdot,v\rangle +i_{C}}$. As in \cite{cpp09}, we can use a
forward-backward algorithm to achieve this goal.\newline
The resulting algorithm is: \medskip \newline
\noindent \textbf{Algorithm:}\newline
Step 1. Set $\underline{\gamma }\in ]0,2\kappa ^{-1}\Vert A\Vert ^{-1}],%
\underline{\lambda }\in ]0,1]$ and $\kappa \in ]0,+\infty \lbrack $. \newline
\qquad Choose $(\tau _{k})_{k\in I\!\!N}$ satisfying $\forall k\in I\!\!N,\
\tau _{k}\in ]0,2[$ and $\sum_{k=0}^{\infty }\tau _{k}(2-\tau _{k})=+\infty $%
\newline
Step 2. Set $k=0,y_{0}=y_{-1/2}\in C$\newline
Step 3. Set $x_{k,0}=y_{k-1/2}$\newline
Step 4. For $i=0,\ldots ,N_{k}-1$\newline
\quad a) Choose $\gamma _{k,n}\in \lbrack \underline{\gamma },2\kappa
^{-1}\Vert A\Vert ^{-1}[$ and $\lambda _{k,i}\in \lbrack \underline{\lambda }%
,1]$.;\newline
\quad b) Compute
\begin{equation}
x_{k,i+1}=x_{k,i}+\lambda _{k,i}\left( P_{C}(\frac{x_{k,i}-\gamma
_{k,i}(\kappa (A^{t}(I-P_{Q})Ax_{k,i}+v_{i})-y_{k})}{1+\gamma _{k,i}}\big)%
-x_{k,i}\right) .
\end{equation}%
Step 5. Set $y_{k+1/2}=x_{k,N_{k}}$\newline
Step 6. Set $y_{k+1}=y_{k}+\tau _{k}(prox_{\kappa {\Vert \cdot\Vert}_{1}}(2y_{k+1/2}-y_{k})-y_{k+1/2})$.\smallskip \newline
Step 7. Increment $k\leftarrow k+1$ and go to Step 3.  }

{\small By a judicious choose of of $N_k$, the convergence of the sequence $%
(y_k)$ to $y$ such that
\begin{equation}
prox_{\kappa (\frac{1}{2}\Vert (I-P_Q)A(\cdot)\Vert^2_2+\langle
\cdot,v\rangle)+i_C}(y)
\end{equation}
solves problem (\ref{Eq:3.1}), follows directly by applying \cite[%
Proposition 4.1]{cpp09}.  }

\subsection{\protect\small Insertion of a Douglas-Rachford step in the
forward-backward algorithm}

{\small We consider $f_{1}=\kappa \Vert \cdot \Vert _{1}+i_{C}$ et $f_{2}=%
\frac{1}{2}\Vert (I-P_{Q})A(\cdot )\Vert _{2}^{2}+\langle \cdot,v\rangle $.
Since $f_{2}$ has a $\Vert A\Vert ^{2}$-Lipschitz gradient, we can apply the
forward-backward algorithm. This requires however to compute $%
prox_{i_{C}+\gamma _{k}\Vert \cdot \Vert }$ which can be performed with
Douglas-Rachford iterations. The resulting algorithm is\smallskip \newline
\noindent \textbf{Algorithm:}\newline
Step 1. Choose $\gamma _{k}$ and $\lambda _{k}$ satisfying assumptions $%
0<\inf_{k}\gamma _{k}\leq \sup_{k}\gamma _{k}<2/\Vert A\Vert ^{2}$, $0<%
\underline{\lambda }\leq \lambda _{k}\leq 1$. \newline
Set $\underline{\tau }\in ]0,2]$.\newline
Step 2. Set $k=0,x_{0}\in C$\newline
Step 3. Set $x_{k}^{\prime }=x_{k}-\gamma _{k}(A^{t}(I-P_{Q})Ax_{n}+v)$.%
\newline
Step 4. Set $y_{k,0}=2prox_{\gamma _{k}\Vert \cdot \Vert _{1}}x_{k}^{\prime
}-x_{k}^{\prime }$.\newline
Step 5. For $i=0,\ldots ,M_{k}-1$.\newline
\quad a) Compute
\begin{equation}
y_{k,i+1/2}=P_{C}\left( \frac{y_{k,i}+x_{k}^{\prime }}{2}\right)
\end{equation}%
b) Choose $\tau _{k,i}\in \lbrack \underline{\tau },2]$.\newline
\quad c) Compute $y_{k,i+1}=y_{k,i}+\tau _{k,i}(prox_{\gamma _{k}\Vert \cdot
\Vert _{1}}(2y_{n,i+1/2}-y_{k,i})-y_{n,i+1/2})$.\newline
\quad d) If $y_{k,i+1}=y_{k,i}$, then goto Step 6.\newline
Step 6. Set $x_{k+1}=x_{k}+\lambda _{k}(y_{k,i+1/2}-x_{k})$.\newline
Step 7. Increment $k\leftarrow k+1$ and go to Step 3. \newline
}

{\small A direct application of \cite[Proposition 4.2]{cpp09} ensures the
existence of positive integers $(\overline{M}_{k})$ such that if $\forall
k\geq 0\ M_{k}\geq \overline{M}_{k}$, then the sequence $(x_{k})$ weakly
convergences to a solution of problem (\ref{Eq:3.1}).  }

\begin{remark}
{\small Other split proximal algorithms may be designed by combining the
fixed-point idea to compute the composite of a convex function with a linear
operator introduced in \cite{msxz13} and the analysis developed for
computing the proximal mapping of the sum of two convex functions developed
in \cite{cpp09} and \cite{Moudafi16}. Primal-dual algorithms considered in
\cite{Condat14} can also be used. Note that there are often several ways to
assign the functions of (\ref{Eq:3.1}) to the terms used in the generic
problem.  }
\end{remark}

\end{document}